\newtheorem{theorem}{Theorem}[section]
\newtheorem{lemma}[theorem]{Lemma}
\newtheorem{corollary}[theorem]{Corollary}
\theoremstyle{definition}
\newtheorem{definition}[theorem]{Definition}
\newtheorem{example}[theorem]{Example}
\theoremstyle{remark}
\newtheorem{remark}{Remark}
\long\def\symbolfootnote[#1]#2{\begingroup%
\def\thefootnote{\fnsymbol{footnote}}\footnote[#1]{#2}\endgroup}
\newcommand{\be}{\begin{equation}}
\newcommand{\ee}{\end{equation}}
\newcommand{\ben}{\begin{enumerate}}
\newcommand{\een}{\end{enumerate}}
\begin{document}


\title{Binomial Polynomials Mimicking Riemann's Zeta Function}


\author{M. W. Coffey\textsuperscript{a} and M. C. Lettington\textsuperscript{b}
}







\maketitle


\begin{abstract}

The (generalised) Mellin transforms of Gegenbauer polynomials have polynomial factors $p^{\lambda}_n(s)$, whose zeros all lie on the `critical line' $\Re\,s=1/2$ (called critical polynomials). The transforms are identified in terms of combinatorial sums related to Gould's S:4/3, S:4/2 and S:3/1 binomial coefficient forms. Their `critical polynomial' factors are then identified in terms of $_3F_2(1)$ hypergeometric functions. Furthermore, we extend these results to a $1$-parameter family of polynomials with zeros only on the critical line that possess the functional relation $p_n(s;\beta)=\pm p_n(1-s;\beta)$.

Normalisation yields the rational function $q_n^{\lambda}(s)$ whose denominator has singularities on the negative real axis. Moreover as $s\rightarrow \infty$ along the positive real axis, $q_n^{\lambda}(s)\rightarrow 1$ from below.

For the Chebyshev polynomials we obtain the simpler S:2/1 binomial form, and with $\mathcal{C}_n$ the $n$th Catalan number, we deduce $4\mathcal{C}_{n-1}p_{2n}(s)$ and $\mathcal{C}_{n}p_{2n+1}(s)$ yield integers with only odd prime factors. The results touch on analytic number theory, special function theory, and combinatorics.

\end{abstract}



\symbolfootnote[0]{The authors would like to thank Dr J. L. Hindmarsh and Prof. M N Huxley for their helpful comments and suggestions.\newline
2010 \emph{Mathematics Subject Classification}: 11B65 (primary), 05A10, 33C20, 33C45, 42C05, 44A20, 30D05 (secondary).\newline
\emph{Key words and phrases}: critical polynomials, binomial coefficients, Gould combinatorial summations, Mellin transforms, hypergeometric functions.}

\section{Introduction}
\label{intro}

 The motivation for this present work is to further understand the triangle of connections that exist between binomial coefficients, functions which only have critical zeros (those on the line $\Re\, s=1/2$ or zeros on the real line, and henceforth referred to as \emph{critical polynomials}), and prime numbers.

As stated by K. Dilcher and K. B. Stolarsky, \cite{dilcher}

\vspace{2mm} ``Two of the most ubiquitous objects in mathematics are the
sequence of prime numbers and the binomial coefficients (and thus Pascal's triangle).
A connection between the two is given by a well-known characterisation of the prime
numbers: Consider the entries in the $k$th row of Pascal's triangle, without the initial and
final entries. They are all divisible by $k$ if and only if $k$ is a prime''.
\vspace{2mm}

By considering a modified form of Pascal's triangle, whose $k$th row consists of the integers
\be
a(k,j):=\frac{(2k-1)(2k+1)}{2j+3}\binom{k+j}{2j+1},\qquad k\in \mathbb{N},\qquad 0\leq j\leq k-1,
\label{eq:m1}
\ee
Dilcher and Stolarsky obtained an analogous characterisation of pairs of twin prime numbers $(2k-1,2k+1)$. This says that the entries in the $k$th row of the $a(k,s)$ number triangle are divisible by $2k-1$ with exactly one exception, and are divisible by $2k+1$ with exactly one exception, if and only if $(2k-1,2k+1)$ is a pair of twin prime numbers.

The corresponding sequence of polynomials $A_k(x)$ obtained from the $k$th row of the number triangle generated by the integers $a(k,j)$ is given by
\[
A_k(x)=\sum_{j=0}^{k-1}a(k,j)x^j=\sum_{j=0}^{k-1}\frac{(2k-1)(2k+1)}{2j+3}\binom{k+j}{2j+1}\,x^j.
\]
It was shown in \cite{dilcher} that this polynomial family satisfies the four-term recurrence relation
\[
A_{k+4}(x)=(2x+4)\left (A_{k+3}(x)+A_{k+1}(x)\right )-(4x^2+4x+6)A_{k+2}(x)-A_{k}(x),
\]
as opposed to a three-term recurrence relation required for orthogonality (e.g. see \cite{szego}), and so they do not constitute an orthogonal polynomial system.

However it is also shown in \cite{dilcher} that the polynomials $A_k(x)$ are closely linked to the orthogonal polynomial system of Gegenbauer polynomials $C_n^\lambda(x)$, defined for $\lambda>-1/2$, $\lambda\ne 0$ (e.g., \cite{andrews}), by the hypergeometric series representation \cite{nbs} (p. 773-802)
\be
C_n^\lambda(x)={{(2\lambda)_n} \over {n!}} ~_2F_1\left(2\lambda+n,-n;\lambda+{1 \over 2};
{{1-x} \over 2}\right),
\label{eq:geghyo}
\ee
(so that $C_n^\lambda(1)=(2\lambda)_n/n!$). The hypergeometric series for $C_n^\lambda(x)$ can be re-written in terms of binomial coefficients and powers of 2 such that
\be
C_n^\lambda(x)=\sum _{r=0}^{\lfloor n/2\rfloor}(-1)^r \binom{n-r}{r}\binom{n-r-1+\lambda}{n-r}(2x)^{n-2r},
\label{eq:m5}
\ee
and taking $\lambda = 2$, one finds that the relation to the $A_k(x)$ polynomial system is given by
\[
A_k(x)=C_{k-1}^2\left (\frac{x+2}{2}\right)+(x+6)C_{k-2}^2\left (\frac{x+2}{2}\right)+C_{k-3}^2\left (\frac{x+2}{2}\right).
\]
The Legendre Polynomials $P_n(x)$ are the case $\lambda = 1/2$ of the Gegenbauer polynomials $C_n^{1/2}(x)$, and a close connection between these polynomials, the prime numbers and the absolute value of the Riemann zeta function, $\zeta(s)=\sum_{n=1}^\infty \frac{1}{n^s}$, defined for $\Re(s)>1$, was established in \cite{choi1}, where $|\zeta(s)|$ is expressed as an infinite sum over products of Legendre polynomials and functions derived from prime numbers.

The location of the zeros of the Riemann zeta function is famously known as the Riemann Hypothesis (1859), which states that all of the non-trivial zeros of $\zeta(s)$ (the trivial zeros lie at the negative even integers) lie on the critical line $\Re\,s=1/2$. In 1901 von Koch reinforced the connection between $\zeta(s)$ and the prime numbers, demonstrating that the Riemann Hypothesis is equivalent to the statement that the error term for $\pi(x)$, the number of primes up to $x$, is of order of magnitude $O\left (\sqrt{x}\log(x)\right )$ \cite{koch}. Riemann had originally shown that
\[
\pi(x)\sim {\rm Li}(x)+\sum_{n=2}^\infty \frac{\mu(n)}{n}{\rm Li}(x^{1/n}),\qquad\text{where}\qquad
{\rm Li}=\int_2^x\frac{{\rm d}u}{\log{u}},
\]
and where $\mu(n)$ is the M\"obius function, which returns $0$ if $n$ is divisible by a prime squared and $(-1)^k$ if $n$ is the product of $k$ distinct primes.

The B\'{a}ez-Duarte equivalence to the Riemann Hypothesis \cite{baez,maslanka} links the the Riemann Hypothesis (and so the prime numbers) to binomial coefficients, via the infinite sequence of real numbers $c_t$, defined such that
\[
c_t:=\sum_{s=0}^t(-1)^s\binom{t}{s}\frac{1}{\zeta(2s+2)},
\]
with the assertion that the Riemann hypothesis is true if and only if $c_t = O(t^{-3/4 + \epsilon})$, for integers $t\geq 0$, and for all $\epsilon >0$.

In relation to understanding the triangle of connections that exist between the three objects consisting of the prime numbers, the binomial coefficients, and functions which only have critical zeros, it is those between the binomial coefficients and the `critical polynomials' that appears to be the least studied, thus motivating the results contained in this paper.

Before elaborating further, we mention some standard notation in which $_2F_1$ denotes the Gauss
hypergeometric function, $_pF_q$ the generalized hypergeometric function,
\[
(a)_n=\Gamma(a+n)/\Gamma(a)=(-1)^n {{\Gamma(1-a)} \over {\Gamma(1-a-n)}}
\]
the Pochhammer symbol,
with $\Gamma$ the Gamma function \cite{andrews,bailey,grad}. We also define $\varepsilon=0$ for $n$ even and $\varepsilon=1$ for $n$ odd.

Our starting point is a special case of the more general class of integrals
\[
\int_0^{a}x^{\alpha-1}(a^2-x^2)^{\beta-1}C_{2n+\epsilon}^{\lambda}(cx)\,{\rm d}x
=\frac{(-1)^n(\lambda)_{n+\epsilon}c^\epsilon a^{\alpha+2\beta+\epsilon-2}}{2^{1-\epsilon}n!}B\left (\frac{\alpha+\epsilon}{2},\beta\right )
\]
\be
\times _3F_2\left (-n,n+\lambda+\epsilon,\frac{\alpha+\epsilon}{2};\epsilon +\frac{1}{2},\frac{\alpha+\epsilon+2\beta}{2};a^2 c^2\right ),
\label{gr1}
\ee
where $\epsilon = (1-(-1)^n)/2$; $a,\Re\,\beta>0;\Re\,\alpha >-\epsilon$, and $B(x,y)=\frac{\Gamma(x)\Gamma(y)}{\Gamma{(x+y)}}$, is the beta function (see entry 2.21.2(1) on p. 517 of \cite{grad}), and which applies to the following definition and Theorem \ref{thm:gr6}.

\begin{definition}
For $\lambda>-1/2$, we define the generalised Mellin transform $M_n^\lambda(s)$, such that
\be
M_n^\lambda(s)=\int_0^1 {{C_n^\lambda(x)x^{s-1}} \over {(1-x^2)^{3/4-\lambda/2}}}dx
=\int_0^{\pi/2} \cos^{s-1} \theta ~C_n^\lambda(\cos \theta) \sin^{\lambda-1/2} \theta ~d\theta,
\label{eq:m9}
\ee
wherein $x=\cos{\theta}$, and we assume that $\Re\,s>0$ for $n$ even and $\Re\,s>-1$ for $n$ odd, denoting by $p^\lambda_n(s)$ the polynomial factor of $M^\lambda_n(s)$.
Then for $\lambda=1$ we have the Mellin transform of the Chebyshev functions \cite{mason,rivlin} of the second kind given by
\be
M_n(s) \equiv \int_0^1 x^{s-1} U_n(x) {{dx} \over {(1-x^2)^{1/4}}}.
\label{eq:m10}
\ee
\end{definition}


In \cite{bumpng,coffeymellin,coffeyxi}, Mellin transforms were used on $[0,\infty)$.
Here we consider Mellin transformations for functions supported on $[0,1]$,
$$({\cal M}_0f)(s)=\int_0^1 f(x)x^s {{dx} \over x}.$$
For properties of the Mellin transform, we mention \cite{butzer}.

Our main results show that the polynomial factors $p_n^\lambda(s)$ of the Mellin transforms in (\ref{eq:m9}) of the Gegenbauer (and so Chebyshev) functions $C_n^\lambda(x)$, yield families of `critical polynomials' $p_n^\lambda(s)$, $n=0,1,2,\ldots$, of degree $\lfloor n/2\rfloor $, satisfying the functional equation $p_n^\lambda(s)=(-1)^{\lfloor n/2\rfloor} p_n^\lambda(1-s)$. Additionally we find that (up to multiplication by a constant) these polynomials can be written explicitly as variants of Gould S:4/1 and S:3/2 binomial sums (see \cite{gould}), the latter form being
\be
p_{2n}^\lambda(s)=\textstyle{n! (2 n)! \binom{n+\lambda-1}{n}\binom{n+\frac{1}{2}(s+\lambda)-\frac{3}{4}}{n}}
\sum _{r=0}^n \frac{ (-1)^{n-r} 2^{2 r-1}\binom{n+r+\lambda-1}{r}\binom{n+r}{2r}\binom{\frac{1}{2}(s-2)+r}{r}}{\binom{n+r}{r}\binom{\frac{1}{2}(s+\lambda)-\frac{3}{4}+r}{r}},
\label{eq:crite}
\ee
\be
p_{2n+1}^\lambda(s)=\textstyle{n! (2 n+1)! \binom{n+\lambda}{n+1}\binom{n+\frac{1}{2}(s+\lambda)-\frac{1}{4}}{n}}
\sum _{r=0}^n \frac{ (-1)^{n-r}
2^{2 r}\binom{n+r+\lambda}{r}\binom{n+r+1}{2r+1}\binom{\frac{1}{2}(s-1)+r}{r}}{\binom{n+r+1}{r}\binom{\frac{1}{2}(s+\lambda)-\frac{1}{4}+r}{r}}.
\label{eq:crito}
\ee
In the case of the Chebyshev polynomials ($\lambda=1)$, this simplifies to the $S:2/1$ form, due to cancellation of binomial factors, and with $\mathcal{C}_n=\frac{1}{n+1}\binom{2n}{n}$, the $n$th Catalan number, $s$ an integer, we show that polynomials $4\mathcal{C}_{n-1}p_{2n}(s)$
and $\mathcal{C}_{n}p_{2n+1}(s)$ yield integers with only odd prime factors.
\begin{example}\label{example:p}
The first few transformed polynomials $p_n^\lambda(s)$, are given by
\begin{align*}
p_0^\lambda(s)=&1/2,\\
p_1^\lambda(s)=&\lambda,\\
p_2^\lambda(s)=&\frac{1}{4} \lambda (2 \lambda+1) (2 s-1)=\frac{1}{2} \lambda (2 \lambda+1) \left(s-\frac{1}{2}\right),\\
p_3^\lambda(s)=&\frac{1}{2} \lambda (\lambda+1) (2 \lambda+1) (2 s-1)= \lambda (\lambda+1) (2 \lambda+1) \left(s-\frac{1}{2}\right),\\
p_4^\lambda(s)=&\frac{1}{8} \lambda (\lambda+1) (2 \lambda+1) \left(8 \lambda s^2-8 \lambda s+6 \lambda+12s^2-12 s+15\right),\\
=&\frac{1}{8} \lambda (\lambda+1) (2 \lambda+1)\textstyle{\left(s-\left (\frac{1}{2}-
\frac{ i\sqrt{9+9 \lambda+2 \lambda^2}}{ 3+2 \lambda}\right )\right)
 \left(s-\left (\frac{1}{2}+\frac{ i\sqrt{9+9 \lambda+2 \lambda^2}}{ 3+2 \lambda}\right )\right)},\\
p_5^\lambda(s)=&\frac{1}{4} \lambda (\lambda+1) (\lambda+2) (2 \lambda+1) \left(8 \lambda s^2-8 \lambda s+14
   \lambda+12 s^2-12 s+51\right)\\
    =&\frac{1}{4} \lambda (\lambda+1) (\lambda+2) (2 \lambda+1)\textstyle{\left(s-\left (\frac{1}{2}-
\frac{ i\sqrt{3}\sqrt{2 \lambda^2+11 \lambda+12}}{ 3+2 \lambda}\right )\right)\left(s+\left (\frac{1}{2}-
\frac{ i\sqrt{3}\sqrt{2 \lambda^2+11 \lambda+12}}{ 3+2 \lambda}\right )\right)}
\end{align*}
\end{example}
The `critical polynomials' under consideration here, in a sense motivate the Riemann hypothesis, and have many important applications to analytic number theory.  For example, using the Mellin transforms of Hermite functions, Hermite polynomials multiplied by a Gaussian factor, Bump and Ng \cite{bumpng} were able to generalise Riemann's second proof of the functional equation of the zeta function $\zeta(s)$, and to obtain a new representation for it.

The polynomial factors of the Mellin transforms of Bump and Ng are realised as certain $_2F_1(2)$ Gauss hypergeometric functions \cite{coffeymellin}.
In a different setting, the polynomials $p_n(x)= ~_2F_1(-n,-x;1;2)=(-1)^n ~_2F_1(-n,x+1;1;2)$ and $q_n(x)=i^n n! p_n(-1/2-ix/2)$ were studied \cite{kirsch}, and they directly correspond to the Bump and Ng polynomials with $s=-x$.  Kirschenhofer, Peth\"{o}, and Tichy considered combinatorial properties of $p_n$, and developed Diophantine properties of them.  Their analytic results for $p_n$ include univariate and bivariate generating functions, and that its
zeros are simple, lie on the line $x=-1/2+it$, $t \in \mathbb{R}$, and that its zeros
interlace with those of $p_{n+1}$ on this line.  We may observe that these polynomials
may as well be written as $p_n(x)={{n+x} \choose n} ~_2F_1(-n,-x;-n-x;-1)$, or
$$p_n(x)={{(-1)^n 2^n \Gamma(n-x)} \over {n!\Gamma(-x)}} ~_2F_1\left(-n,-n;x+1-n;{1 \over 2}
\right).$$
Previous results obtained by the authors related to this area of research are discussed in \cite{coffeylettunpub,coffeygen,HDF}, where in the former paper families of `critical polynomials' are obtained from generalised Mellin transforms of classical orthogonal Legendre polynomials. In \cite{coffeygen}
the first author has addressed the Mellin transform of certain generalised Hermite functions, where the resulting critical polynomials possess
a reciprocity relation. In the latter paper sequences of `critical polynomials' are considered which can also be obtained by generalised Mellin transforms of families of orthogonal polynomials whose coefficients are the weighted binomial coefficients defined by
\[
B_k(x)=\sum_{j=0}^{k}b(k,j)x^j=\sum_{j=0}^{k}\frac{2k+1}{2j+1}\binom{k+j}{2j}\,x^j.
\]
Here we find that for $\Re\,s>-1/4$, the Mellin transforms
\[
M_n^B(s)= \int_{-4}^0 {B_n(x)x^{s-3/4} \over {(4+x)^{3/4}}}dx=(-1)^{s+5/4}4^s 4^{-n-1} \Gamma(1/4) p_n(s) {{\Gamma\left(s+{1 \over 4}\right)} \over {\Gamma\left(s+{{2n+1} \over 2}\right)}},
\]
yield critical polynomial factors $p_n(s)$, which obey the \emph{perfect reflection} functional equation $p_n(s)=\pm p_n(~1~-~s~)$.

The `perfect-reflection' functional equation $p_n^\lambda(s)=(-1)^{\lfloor n/2\rfloor} p_n^\lambda(1-s)$, is similar to that for Riemann's xi function
$\xi(s)$, defined by
$\xi(s) = \tfrac{1}{2} s(s-1) \pi^{-s/2} \Gamma\left(\tfrac{1}{2} s\right) \zeta(s)$, and which satisfies $\xi(s)=\xi(1-s)$, so that for $t\in\mathbb{R}$, the zeros of $\xi(1/2+it)$ and $\zeta(1/2+it)$ are identical. Drawing upon this analogy, one interpretation is that the polynomials $p_n^\lambda(s)$ are normalised (from a functional equation perspective) polynomial forms of the rational functions $q_n^\lambda(s)$, defined for $n\in\mathbb{N}$ by
\be
q_{2n}^\lambda(s)=\frac{2\,p_{2n}^\lambda(s)}{\textstyle\lambda  (n-1)! (2 n)!\binom{2n+2\lambda-1}{2n-1} \binom{n+\frac{1}{2}(s+\lambda)-\frac{3}{4}}{n}}
=\frac{2^{2n+1} p_{2n}^\lambda(s)}{(2\lambda)_{2n}\prod_{j=1}^n (2s+2\lambda +4j-3)},
\label{eq:norme}
\ee
\be
q_{2n+1}^\lambda(s)=\frac{p_{2n+1}^\lambda(s)}{\textstyle\lambda (n)! (2 n)!\binom{2n+2\lambda}{2n} \binom{n+\frac{1}{2}(s+\lambda)-\frac{1}{4}}{n}}
=\frac{2^{2n+1} p_{2n+1}^\lambda(s)}{(2\lambda)_{2n+1}\prod_{j=1}^n (2s+2\lambda +4j-1)},
\label{eq:normo}
\ee
where both numerator and denominator polynomials of $q_n^\lambda(s)$ are of degree $\lfloor n/2\rfloor$.

For $\lambda>-1/2$, $\lambda \ne 0$, and $\Re\,s>0$, the $\lfloor n/2\rfloor$ linear factors of the denominator polynomials of $q_n^\lambda(s)$, are each non-zero, so that for these values of $\lambda$, we have $q_n^\lambda(s)$ has no singularities with $\Re\,s>0$. Hence
the rational function $q_n^\lambda(s)$ has the same `critical zeros' as the polynomial $p_n^\lambda(s)$, and for $t\in\mathbb{R}$, the roots of $p_n^\lambda(1/2+it)$ and $q_n^\lambda(1/2+it)$ are identical.

The $[n/2]$ poles of $q_{n}^\lambda(s)$ (so zeros of the denominator polynomial of $q_{n}^\lambda(s)$) occur on the negative real axis when $2s=3-\lambda -4j$ or $2s=1-2\lambda -4j$ (depending on the parity of $n$) and in Theorem \ref{theorem5}, for $s\in(1,\infty)$, we find that $q_{n}^\lambda(s)$ takes values on $(0,1)$, with
$\mathop{\lim}_{s\rightarrow \infty}q_n^\lambda(s)=~1$ (from below), and obeys the functional equation
\be
q_{n}^\lambda(s)=(-1)^{\lfloor n/2\rfloor}\binom{\lfloor n/2\rfloor +\frac{1-s+\lambda+\epsilon}{2}-\frac{3}{4}}{\lfloor n/2\rfloor}
\binom{\lfloor n/2\rfloor+\frac{s+\lambda+\epsilon}{2}-\frac{3}{4}}{\lfloor n/2\rfloor}^{-1}q_{n}^\lambda(1-s).
\label{eq:fq}
\ee
It follows that on $\mathbb{R}_{> 1}$, the behaviour of $q_{n}^\lambda(s)$ has similarities to that of $1/\zeta(s)$, albeit with a rate of convergence to the limit point 1, considerably slower than for $1/\zeta(s)$.

To give an overview, the present work is split into four sections, with the main results concerning the critical polynomials arising from Mellin transforms of Gegenbauer polynomials appearing after this introduction in the second section. In the third section we prove these results, utilising continuous Hahn polynomials to locate the `critical zeros'. The fourth and concluding section then considers further possible extensions to these results.

\section{Critical Polynomial Results}

\begin{theorem}\label{thm:gr6}
The Mellin transforms (\ref{eq:m9}) may be written as $_3F_2(1)$ hypergeometric functions, such that
\be
M_{2n}^\lambda(s)
=(-1)^n M_0^\lambda(s)\binom{\lambda+n-1}{n} \, _3F_2\left(-n,\lambda+n,\frac{s}{2};\frac{1}{2},\frac{\lambda}{2}+\frac{s}{2}+\frac{1}{4};1\right),
\label{gr2}
\ee
and
\be
M_{2n+1}^\lambda(s)=(-1)^n 2 M_0^\lambda(s+1) (n+1) \binom{\lambda+n}{n+1} \,
_3F_2\left(-n,\lambda+n+1,\frac{s}{2}+\frac{1}{2};\frac{3}{2},\frac{\lambda+s}{2}+\frac{3}{4};1\right),
\label{gr3}
\ee
where
 \be
 M_0^\lambda(s)={{\Gamma\left({\lambda \over 2}+{1 \over 4}\right)\Gamma\left({s \over 2}\right)}
\over {2\Gamma \left({{s+\lambda} \over 2}+{1 \over 4}\right)}},\qquad
\text{\rm and}\qquad M_1^\lambda(s)=2\lambda M_0^\lambda(s+1),
\label{gr5}
\ee
and the latter identities for odd and even cases give the equivalent hypergeometric form, which can be written for general $n$ as
\be
M_n^\lambda(s)={{\Gamma\left({\lambda \over 2}+{1 \over 4}\right)\Gamma\left({{s+n} \over 2}\right)}\over {2\Gamma \left({{s+n+\lambda} \over 2}+{1 \over 4}\right)}}\binom{2\lambda+n-1}{n}
~_3F_2\left({\lambda \over 2}+{1 \over 4},{{1-n} \over 2},-{n \over 2};{1 \over 2}+\lambda,
1-{{(n+s)} \over 2};1\right).
\label{gr4}
\ee
\end{theorem}

 \begin{theorem}
\label{theorem2}
Let $M_n^\lambda(s)$ be defined as in $(\ref{eq:m9})$. Then we have

\noindent
(a) the mixed recurrence relation
$$n M_n^\lambda(s)=2(\lambda+n-1)M_{n-1}^\lambda(s+1)-(2\lambda+n-2)M_{n-2}^\lambda(s),$$

\noindent
(b) the generating function
$$G^\lambda(s,t) = \sum_{k=0}^\infty M_k^\lambda(s)t^k =\int_0^1 {1 \over {(1-x^2)^{3/4-\lambda/2}}}{x^{s-1} \over {(1-2tx+t^2)^\lambda}}dx$$
$$={1 \over {(1+t^2)^\lambda}}{{\Gamma\left({1 \over 4}+{\lambda \over 2}\right)} \over 2}\left[{{\Gamma(\lambda)\Gamma\left({s \over 2}\right)} \over {\Gamma\left({{s+\lambda} \over 2}+{1 \over 4}\right)}} ~_3F_2\left({{\lambda+1} \over 2},{\lambda \over 2},{s \over 2};{1 \over 2},{{s+\lambda} \over 2}+{1 \over 4};{{4t^2} \over {(1+t^2)^2}}\right) \right.$$
$$\left. + {{2t\Gamma(\lambda+1)} \over {(1+t^2)}}{{\Gamma\left({{s+1} \over 2}\right)} \over {\Gamma\left({{s+\lambda} \over 2}+{3 \over 4}\right)}} ~_3F_2\left({{\lambda+1} \over 2},
1+{\lambda \over 2},{{s+1} \over 2};{3 \over 2},{{s+\lambda} \over 2}+{3 \over 4}; {{4t^2} \over {(1+t^2)^2}}\right) \right], $$

\noindent (c) the polynomial factors satisfy the functional equation $p_n^\lambda(s)=(-1)^{\lfloor n/2 \rfloor} p_n^\lambda(1-s)$,

\noindent
(d) the recurrence relation in $s$
$$[6-4(\lambda+2\lambda n+n^2)-16s+8s(s+1)]M_n^\lambda(s)$$
$$+[-9+4(n+\lambda)^2+16(s+2)-4(s+2)(s+3)]M_n^\lambda(s+2)$$
$$-4(s-1)(s-2)M_n^\lambda(s-2)=0,$$
with
$$M_0^\lambda(s)={{\Gamma\left({\lambda \over 2}+{1 \over 4}\right)\Gamma\left({s \over 2}\right)}
\over {2\Gamma \left({{s+\lambda} \over 2}+{1 \over 4}\right)}},\qquad
\text{\rm and}\qquad M_1^\lambda(s)=2\lambda M_0^\lambda(s+1),$$
and
\noindent
(e) (location of zeros) the polynomial factors  $p_n^\lambda(s)$ of $M_n^\lambda(s)$, have zeros only on the critical line.
\end{theorem}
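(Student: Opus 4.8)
The plan is to treat parts (a), (b) and (d) as direct consequences of classical Gegenbauer identities transported through the Mellin integral, and to reserve the real work for the polynomial factor in (c) and (e). For (a), I would start from the standard three-term recurrence $nC_n^\lambda(x)=2(n+\lambda-1)xC_{n-1}^\lambda(x)-(n+2\lambda-2)C_{n-2}^\lambda(x)$, multiply through by the kernel $x^{s-1}(1-x^2)^{\lambda/2-3/4}$, and integrate over $[0,1]$; the extra factor $x$ in front of $C_{n-1}^\lambda$ simply advances the Mellin variable to $s+1$, giving the mixed recurrence at once. For (b) I would insert the generating function $\sum_k C_k^\lambda(x)t^k=(1-2xt+t^2)^{-\lambda}$ under the integral (legitimate for small $|t|$), which produces the stated integral form of $G^\lambda(s,t)$ immediately; I would then write $1-2xt+t^2=(1+t^2)\bigl(1-2xt/(1+t^2)\bigr)$, expand the binomial series in $2xt/(1+t^2)$, and evaluate each $\int_0^1 x^{s+k-1}(1-x^2)^{\lambda/2-3/4}dx=\tfrac12 B(\tfrac{s+k}{2},\tfrac{\lambda}{2}+\tfrac14)$. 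Splitting the $k$-sum into even and odd indices and applying $(\lambda)_{2m}=4^m(\tfrac{\lambda}{2})_m(\tfrac{\lambda+1}{2})_m$ and $(2m)!=4^m m!(\tfrac12)_m$ collapses the two subsums into the two $_3F_2$ of argument $4t^2/(1+t^2)^2$.

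For the $s$-recurrence (d) I would take the Mellin transform of the Gegenbauer differential equation $(1-x^2)y''-(2\lambda+1)xy'+n(n+2\lambda)y=0$ against the weighted kernel. Integration by parts converts each $x$-derivative of $y$ into multiplication by an affine function of $s$ inside the transform, while the explicit factors $(1-x^2)$ and the weight exponent generate the shifts $s\mapsto s\pm 2$; collecting the boundary terms (which vanish for $\Re\,s$ in the stated range) and the polynomial coefficients in $s$ yields the displayed three-term relation with its quadratic-in-$s$ coefficients. The initial data $M_0^\lambda,M_1^\lambda$ follow from the single Beta integral together with $C_1^\lambda(x)=2\lambda x$.

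The heart of the argument is the polynomial factor, which I would produce explicitly by Mellin-transforming the power-series form $(\ref{eq:m5})$ of $C_n^\lambda$ term by term: each monomial $(2x)^{n-2r}$ contributes a Beta value, so $M_n^\lambda(s)$ becomes a finite sum of ratios $\Gamma(\tfrac{s+n-2r}{2})/\Gamma(\tfrac{s+\lambda+n-2r}{2}+\tfrac14)$. Factoring out the base Gamma quotient with numerator $\Gamma(\tfrac{s+\varepsilon}{2})$ and denominator $\Gamma(\tfrac{s+\lambda+\varepsilon}{2}+\tfrac14+\lfloor n/2\rfloor)$, where $\varepsilon=n\bmod 2$, leaves a polynomial of degree $\lfloor n/2\rfloor$, namely $p_n^\lambda(s)$, expressible as a terminating $_3F_2(1)$. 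For (c) I would reverse the order of summation in this terminating series and apply the Pochhammer reflection $(a)_k=(-1)^k(1-a-k)_k$; the goal of that computation is to show the reversed series coincides with the expansion of $(-1)^{\lfloor n/2\rfloor}p_n^\lambda(1-s)$, which is the asserted functional equation. Geometrically, $s\mapsto 1-s$ is the reflection $t\mapsto-t$ on the line $s=\tfrac12+it$, and the series reversal encodes the parity of the underlying polynomial.

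For the location of zeros (e) the functional equation alone only forces the zeros to be symmetric under $s\mapsto 1-s$, i.e.\ to occur in pairs $\{s_0,1-s_0\}$; the substantive step is to exclude pairs off the line. My plan is to recognize $p_n^\lambda(\tfrac12+it)$, up to a real normalizing constant, as a sequence of orthogonal polynomials in the real variable $t$---the continuous Hahn polynomials, matched through their $_3F_2(1)$ parameters---so that the zeros in $t$ are eigenvalues of a real symmetric Jacobi matrix and are therefore real; translating back, every zero of $p_n^\lambda$ lies on $\Re\,s=\tfrac12$. Concretely I would verify that along the critical line these polynomials satisfy a genuine three-term recurrence in the degree $\lfloor n/2\rfloor$ (stepping $n$ by two) with real coefficients and positive products of consecutive off-diagonal entries, and then invoke the classical real-and-simple-zeros theorem for orthogonal polynomials. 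I expect this identification to be the main obstacle: one must disentangle the mixed ($s$- and $n$-shifting) recurrence of part (a) into a pure three-term recurrence in the degree at fixed argument, and confirm the positivity (self-adjointness) that upgrades ``symmetric about the line'' to ``on the line.'' The consistency of the resulting $t\mapsto-t$ parity with the functional equation (c) provides a useful internal check.
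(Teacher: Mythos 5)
Your treatments of (a), (b) and (d) coincide with the paper's: (a) is the Gegenbauer three-term recurrence integrated against the kernel; (b) is the generating function $(1-2xt+t^2)^{-\lambda}$ inserted under the integral, with your expansion in powers of $2xt/(1+t^2)$, Beta evaluations and even/odd splitting paralleling the $\lambda=1$ computation of Lemma~\ref{lemma4}; and (d) is obtained, as in the paper, by substituting $C_n^\lambda(x)=(1-x^2)^{3/4-\lambda/2}f(x)$ into the Gegenbauer differential equation, multiplying by $x^{s-1}$ and integrating by parts. Your termwise-Beta derivation of the terminating ${}_3F_2(1)$ form of $p_n^\lambda$ is likewise exactly the paper's proof of Theorem~\ref{theorem3}. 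The gaps are in the two substantive parts, (c) and (e). For (c), reversing the terminating series and applying $(a)_k=(-1)^k(1-a-k)_k$ does not deliver the functional equation. Writing $p_{2m}^\lambda(s)\propto\sum_{k=0}^m A_k(-1)^k\bigl(\tfrac{s}{2}\bigr)_{m-k}$ with $A_k$ free of $s$, the Pochhammer reflection reduces the claim to $\sum_k A_k\bigl(\tfrac{s+1}{2}-m+k\bigr)_{m-k}=\sum_k(-1)^kA_k\bigl(\tfrac{s}{2}\bigr)_{m-k}$, which is precisely as hard as the functional equation itself; and the terms do not correspond under reversal. Already for $n=2$ (any $\lambda$) the two terms are $\tfrac{s}{2}$ and $-\tfrac14$, with $p_2^\lambda(s)\propto s-\tfrac12$: the reflected first term $\tfrac{1-s}{2}$ is not $\pm$ the reversed term $\tfrac14$; only the aggregate sum is antisymmetric. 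Series reversal is merely one of the Thomae transformations in the paper's Appendix and leaves the reflection unproven. The paper instead argues as in Theorem~\ref{theorem1} (and Theorem~\ref{theorem6}, identifying $\beta=3/4-\lambda/2$): represent the ${}_3F_2(1)$ by a Beta-transform integral over a ${}_2F_1$, apply the $x\to 1-x$ transformation, which collapses to a single ${}_2F_1$ because $n$ is a nonnegative integer, and use Gamma reflection, the ratios $\sin(\pi s/2)/\sin(\pi(n+s)/2)$ and $\cos(\pi s/2)/\sin(\pi(n+s)/2)$ producing the factor $(-1)^{\lfloor n/2\rfloor}$. A purely series-side proof would need a composite of the Appendix transformations realizing $\tfrac{s}{2}\mapsto\tfrac{1-s}{2}$ (note the parameter difference $d-b$ with $d=\tfrac12$, $b=\tfrac{s}{2}$), not reversal alone.

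For (e), the continuous-Hahn identification you propose is the alternative route the paper only sketches in its Section 3, and the obstacle you yourself flag is real and is not overcome by your plan: the matching is to $p_n\bigl(-\tfrac{is}{2};\tfrac14-\tfrac{\lambda}{2}-n,\,0,\,\tfrac34-\tfrac{\lambda}{2}-n,\,\tfrac12\bigr)$, whose parameters depend on $n$ and satisfy $a,c<0$ for $n\geq 1$, so the classical positive-measure hypotheses fail; moreover the Parseval weight contains $\Gamma\bigl(\tfrac{s+n+\lambda}{2}+\tfrac14\bigr)$ and hence varies with $n$, so one does not have a single orthogonal family to which the real-and-simple-zeros theorem applies off the shelf. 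The paper's actual proof of (e) is different and self-contained: from (d) one derives the three-term relation (2.3) for $p_n^\lambda$, shifts to $u(s)=p_n^\lambda(s+1/2)$, factors $u(s)=c\prod_i(s-r_i)$, and evaluates the relation at a root $r_k$, where after cancellation (a factor $r_k-3/2$ for $n$ even, $r_k-1/2$ for $n$ odd) it becomes a two-term identity; equating the absolute values of the two sides forces $\Re\,r_i=0$ for every root, i.e.\ all zeros of $p_n^\lambda$ lie on $\Re\,s=1/2$. You should either adopt that modulus argument or actually supply the missing positivity/self-adjointness analysis, which for these parameter ranges is nontrivial and is the reason the paper presents the Hahn connection only as a remark.
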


\begin{lemma}
\label{lemma7}
Let $\lambda=1$, so that $M_n(s)$ is the Mellin transform of the Chebyshev function given in (\ref{eq:m10}). Then we have that

(a)
$$M_n(s)=(n+1){{\Gamma\left({3 \over 4}\right)\Gamma\left({{n+s} \over 2}\right)} \over {2\,\Gamma\left({{n+s} \over 2}+{3 \over 4}\right)}} ~_3F_2\left({3 \over 4},{{1-n} \over 2},-{n \over 2};{3 \over 2},1-{{n+s} \over 2};1\right),$$
and (b)
$$M_n(s)=2^{n-1}{{\Gamma\left({3 \over 4}\right)\Gamma\left({{n+s} \over 2}\right)} \over {\Gamma\left({{n+s} \over 2}+{3 \over 4}\right)}} ~_3F_2\left({{1-n} \over 2},-{n \over 2},{1 \over 4}-{{(n+s)} \over 2};-n,1-{{n+s} \over 2};1\right).$$
\end{lemma}

\begin{proof}
The interchange of finite summation and the integration of (\ref{eq:m10}) is used in
both instances.
\end{proof}


\begin{theorem}
\label{theorem1} (a) When $\lambda=1$, the polynomials $p_n$, corresponding to the Chebyshev polynomials of the second kind, satisfy the simplified recursion relation, with~$p_0=\Gamma(3/4)/2$~and~$p_1=\Gamma(3/4)$.
for $n$ even,
\be
p_n(s)=sp_{n-1}(s+1)-{1 \over 2}\left(s+n-{1 \over 2}\right)p_{n-2}(s), \label{eq:1.2a}
\ee
and for $n$ odd,
\be
p_n(s)=2p_{n-1}(s+1)-{1 \over 2}\left(s+n-{1 \over 2}\right)p_{n-2}(s). \label{eq:1.2b}
\ee
(b) The polynomials $p_n(s)$, of degree $\lfloor n/2 \rfloor$, satisfy the functional equation
$$p_n(s)=(-1)^{\lfloor n/2 \rfloor} p_n(1-s).$$

\noindent
(c) These polynomials have zeros only on the
critical line.  Further, all zeros $\neq 1/2$ occur in complex conjugate pairs.
\end{theorem}

\begin{theorem}
\label{theorem3}
The Mellin transforms (\ref{eq:m9}) may be written as a constant multiplied by a variant on Gould's combinatorial S:4/2 and S:3/1 functions, such that
\[
M_{2n}^\lambda(s)=M_0^\lambda(s)\sum _{r=0}^n {\frac{(-1)^{n-r} 2^{2 r}\binom{n+r+\lambda -1}{n+r} \binom{n+r}{2 r}
\binom{\frac{s-2}{2}+r}{r}\binom{n+\frac{s+\lambda }{2}-\frac{3}{4}}{n-r} }{\binom{n}{r}\binom{n+\frac{s+\lambda }{2}-\frac{3}{4}}{n}}}
\]
\[
=M_0^\lambda(s)\sum _{r=0}^n \frac{(-1)^{n-r}  2^{2 r}
\binom{n+r+\lambda-1}{n+r}\binom{n+r}{2r}\binom{\frac{s-2}{2}+r}{r}}{\binom{\frac{s+\lambda}{2}-\frac{3}{4}+r}{r}},
\]
and
\[
M_{2n+1}^\lambda(s)=M_0^\lambda(s+1)\sum_{r=0}^n \frac{(-1)^{n-r} 2^{2 r+1}\binom{n+r+\lambda}{n+r+1} \binom{n+r+1}{2r+1}\binom{\frac{s-1}{2}+r}{r}\binom{n+\frac{s+\lambda}{2}-\frac{1}{4}}{n-r}}{\binom{n}{r}\binom{n+\frac{s+\lambda}{2}-\frac{1}{4}}{n}}
\]
\[
=M_0^\lambda(s+1)\sum _{r=0}^n \frac{(-1)^{n-r}2^{2 r+1}\binom{n+r+\lambda}{n+r+1} \binom{n+r+1}{2 r+1}\binom{\frac{s-1}{2}+r}{r}}{\binom{\frac{s+\lambda}{2}-\frac{1}{4}+r}{r}}.
\]
\end{theorem}

\begin{corollary}
\label{corollary1}
When $s=2t$ is an even positive integer then we have
\[
M_0^\lambda(s)=\frac{1}{4t}\binom{\frac{\lambda}{2}+\frac{1}{4}+t-1}{t}^{-1},
\]
and when $s=2t+1$ is an odd positive integer
\[
M_1^\lambda(s)=2\lambda M_0^\lambda(s+1)=2\lambda M_0^\lambda(2t+2)=\frac{\lambda}{2t+2}\binom{\frac{\lambda}{2}+\frac{1}{4}+t}{t+1}^{-1}.
\]
Hence, when $s$ is a non-negative integer then we have the Gould's S:4/3 summation variants
\[
M_{2n}^\lambda(2t)=\frac{1}{2t}\sum _{r=0}^n \frac{(-1)^{n-r} 2^{2 r}\binom{n+r+\lambda -1}{n+r}\binom{n+r}{2 r}\binom{t-1+r}{r}
\binom{n+t+\frac{\lambda }{2}-\frac{3}{4}}{n-r} }{\binom{n}{n-r}\binom{t+\frac{\lambda}{2}-\frac{3}{4}}{t} \binom{n+t+\frac{\lambda }{2}-\frac{3}{4}}{n}},
\]
and
\[
M_{2n+1}^\lambda(2t+1)=\frac{\lambda}{t+1}\sum _{r=0}^n \frac{(-1)^{n-r} 2^{2 r+1}\binom{n+r+\lambda }{n+r+1}\binom{n+r+1}{2 r+1} \binom{t+r}{r}
\binom{n+t+\frac{\lambda }{2}+\frac{1}{4}}{n-r}}{\binom{n}{r}\binom{t+\frac{\lambda}{2}+\frac{1}{4}}{t+1} \binom{n+t+\frac{\lambda }{2}+\frac{1}{4}}{n}}.
\]
\end{corollary}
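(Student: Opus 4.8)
The plan is to view the Corollary as the integer-argument specialisation of Theorem~\ref{theorem3}. The point is that, once the prefactors $M_0^\lambda$ and $M_1^\lambda$ occurring there are written in closed form as reciprocals of single binomial coefficients, each of the S:4/2 sums of Theorem~\ref{theorem3} picks up exactly one extra binomial factor in its denominator and thereby becomes an S:4/3 sum. I would therefore split the proof into two steps: evaluate the prefactors at integer arguments, then substitute the resulting half-integer data into the two S:4/2 sums and absorb the prefactor. Throughout I write $b=\tfrac\lambda2+\tfrac14$ and let the relevant integer be $m\in\mathbb{N}_0$ (this is the quantity the Corollary calls $s$).

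For the prefactors I would start from the closed form $M_0^\lambda(s)=\Gamma(b)\Gamma(\tfrac s2)/[2\Gamma(\tfrac s2+b)]$ supplied by Theorem~\ref{theorem2}(d). Setting $s=2m$ gives $M_0^\lambda(2m)=\Gamma(b)\Gamma(m)/[2\Gamma(m+b)]$, and the key observation is that a ratio of Gamma functions whose arguments differ by the non-integer shift $b$ is an inverse binomial coefficient: $\Gamma(m+b)/[\Gamma(b)\,m!]=\binom{b+m-1}{m}$. This turns $M_0^\lambda(2m)$ into the stated reciprocal-binomial expression. For the odd prefactor I would invoke the companion relation $M_1^\lambda(s)=2\lambda M_0^\lambda(s+1)$, also from Theorem~\ref{theorem2}(d); putting $s=2m+1$ reduces $M_1^\lambda$ to $2\lambda M_0^\lambda(2m+2)$, to which the same Gamma-to-binomial identity applies with $m$ replaced by $m+1$.

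I would then specialise the two S:4/2 sums of Theorem~\ref{theorem3}. In the even sum, taking the Mellin argument to be $2m$ collapses the half-integer entry $\tfrac{s-2}2+r$ to the integer $m-1+r$, so $\binom{\frac{s-2}2+r}{r}\to\binom{m-1+r}{r}$, while $\tfrac{s+\lambda}2-\tfrac34\to m+\tfrac\lambda2-\tfrac34$ updates the two $\lambda$-dependent binomials and $\binom nr=\binom n{n-r}$ is recorded in the displayed denominator. Multiplying this sum by $M_0^\lambda(2m)$, written as a constant times $\binom{m+\frac\lambda2-\frac34}{m}^{-1}$, and distributing the single reciprocal binomial through the summation inserts one further binomial into each denominator, which is exactly the passage from S:4/2 to the claimed S:4/3 form. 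The odd identity follows identically from the second S:4/2 sum with Mellin argument $2m+1$, under which $\tfrac{s-1}2+r\to m+r$ and $\tfrac{s+\lambda}2-\tfrac14\to m+\tfrac\lambda2+\tfrac14$, using the odd prefactor evaluated above.

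The one point requiring genuine care, rather than any new idea, is the bookkeeping: the conversions between Gamma ratios and inverse binomials must be carried out with the correct half-integer shift $b=\tfrac\lambda2+\tfrac14$, and there are two layers of variable (the Mellin argument and the integer $m$) to keep separate. I expect the main obstacle to be verifying that the constant prefactors $\tfrac1{2s}$ and $\tfrac{\lambda}{s+1}$ emerge with the right powers of $2$ and factors of $\lambda$ after the reciprocal binomial has been absorbed — this is routine but error-prone. It is worth remarking that the integrality of $\binom{m-1+r}{r}$ and $\binom{m+r}{r}$ holds only once the Mellin argument is a non-negative integer, and it is precisely this that makes the final expressions bona fide S:4/3 combinatorial sums.
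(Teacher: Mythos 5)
Your proposal is correct and is essentially the paper's own proof: the authors dispose of this corollary in a single sentence at the end of the proof of Theorem~\ref{theorem3}, saying it follows immediately by replacing $M_0^\lambda(s)$ and $M_0^\lambda(s+1)$ with their equivalent binomial-coefficient forms when $s$ is an even or odd integer, which is exactly the $\Gamma$-ratio-to-inverse-binomial conversion (via $\Gamma(m+b)/[\Gamma(b)\,m!]=\binom{b+m-1}{m}$ with $b=\tfrac{\lambda}{2}+\tfrac14$) and subsequent absorption into the S:4/2 sums that you spell out. Your closing caution about tracking the constants and the two layers of variable is well placed--that bookkeeping is the only content the paper leaves implicit.
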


\begin{theorem}
\label{theorem4}
Let $\varepsilon=(1-(-1)^n)/2$. Then the Mellin transforms are of the form
$$M_n^\lambda(s)={{\Gamma\left({\lambda \over 2}+{1 \over 4}\right)\Gamma\left({{s+\epsilon}
\over 2}\right)} \over {2(n!)\Gamma\left({{s+n+\lambda} \over 2}+{1 \over 4}\right)}}\,
p_n^\lambda(s),$$
 and the polynomial factors $p_n^\lambda(s)$ satisfy the difference equations in $s$
\[
(s+2)  (2 \lambda+4 n-2 s-5) p_{2 n}^{\lambda }(s+4)- \left(8 \lambda n+2 \lambda+8 n^2-4 s^2-12 s-11\right) p_{2 n}^{\lambda }(s+2)
\]
\[
-(s+1) (2 \lambda+4 n+2s+1) p_{2 n}^{\lambda }(s)=0,
\]
and
\[
(s+3) (2 \lambda+4 n-2   s-3) p_{2 n+1}^{\lambda }(s+4)-\left(8 \lambda n+6 \lambda+8 n^2+8 n-4 s^2-12 s-9\right)
p_{2 n+1}^{\lambda}(s+2)
\]
\[
-s (2 \lambda+4 n+2 s+3) p_{2 n+1}^{\lambda }(s)=0.
\]

\end{theorem}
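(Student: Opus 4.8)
The plan is to derive both assertions from results already established: the explicit representations of Theorem \ref{theorem3} give the stated factorization, while the second-order $s$-recurrence of Theorem \ref{theorem2}(d) yields the difference equations after the Gamma prefactor is stripped off. I would treat the even index $N=2n$ and odd index $N=2n+1$ in parallel, keeping the parity $\epsilon\in\{0,1\}$ as a formal parameter.

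For the stated form of $M_N^\lambda(s)$, I would start from the combinatorial-sum representation of Theorem \ref{theorem3}. In the even case $N=2m$ one has $M_{2m}^\lambda(s)=M_0^\lambda(s)\,\Sigma_m(s)$, where $M_0^\lambda(s)=\Gamma(\tfrac{\lambda}{2}+\tfrac14)\Gamma(\tfrac{s}{2})/[2\,\Gamma(\tfrac{s+\lambda}{2}+\tfrac14)]$ and $\Sigma_m(s)=\sum_{r=0}^m (-1)^{m-r}2^{2r}\binom{m+r+\lambda-1}{m+r}\binom{m+r}{2r}\binom{(s-2)/2+r}{r}\big/\binom{(s+\lambda)/2-3/4+r}{r}$. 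Writing the asserted prefactor as $g_N(s)=\Gamma(\tfrac\lambda2+\tfrac14)\Gamma(\tfrac{s+\epsilon}2)/[2\,N!\,\Gamma(\tfrac{s+N+\lambda}2+\tfrac14)]$, the ratio $M_0^\lambda(s)/g_{2m}(s)$ telescopes (via $\Gamma(a+m)/\Gamma(a)=(a)_m$) to $(2m)!\,(\tfrac{s+\lambda}2+\tfrac14)_m$. Converting binomials by $\binom{z+r}{r}=(z+1)_r/r!$ turns the $s$-dependent denominator $\binom{(s+\lambda)/2-3/4+r}{r}$ into $((s+\lambda)/2+1/4)_r/r!$, whose reciprocal absorbs into $(\tfrac{s+\lambda}2+\tfrac14)_m$ to leave $((s+\lambda)/2+1/4+r)_{m-r}$, while $\binom{(s-2)/2+r}{r}\,r!=(s/2)_r$. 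Thus $p_{2m}^\lambda(s)$ is a sum of products $(s/2)_r\,((s+\lambda)/2+1/4+r)_{m-r}$, each of degree $m=\lfloor N/2\rfloor$, so it is manifestly a polynomial of that degree; the odd case is identical using the $M_0^\lambda(s+1)$ representation and $\epsilon=1$.

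For the difference equations I would substitute the factorization $M_N^\lambda(s)=g_N(s)\,p_N^\lambda(s)$ into the recurrence of Theorem \ref{theorem2}(d). Since that recurrence links $M_N^\lambda$ at $s-2,s,s+2$, I first replace $s$ by $s+2$ to align it with the shifts $s,s+2,s+4$ in the claim. Dividing by $g_N(s)$, the only non-polynomial data are the Gamma ratios $g_N(s+2)/g_N(s)=2(s+\epsilon)/(2s+2N+2\lambda+1)$ and $g_N(s+4)/g_N(s)=4(s+\epsilon)(s+2+\epsilon)/[(2s+2N+2\lambda+1)(2s+2N+2\lambda+5)]$, both read off from $\Gamma(z+1)=z\Gamma(z)$. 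Multiplying through by $(2s+2N+2\lambda+1)(2s+2N+2\lambda+5)$ clears every denominator and produces a three-term relation among $p_N^\lambda(s),p_N^\lambda(s+2),p_N^\lambda(s+4)$ with polynomial coefficients.

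The crux, and the only real obstacle, is reducing those coefficients to the degree-two coefficients displayed in the statement, and this rests on two cancellations. The pole of $\Gamma(\tfrac{s+\epsilon}2)$ furnishes a common factor $2(s+\epsilon)$—namely $s$ when $N$ is even and $s+1$ when $N$ is odd—across all three terms. More decisively, after the shift the middle coefficient of Theorem \ref{theorem2}(d) becomes $4(N+\lambda)^2-(2s+5)^2$, a difference of squares factoring as $(2N+2\lambda-2s-5)(2s+2N+2\lambda+5)$; the factor $(2s+2N+2\lambda+5)$ is exactly the residual denominator left in the other two terms, so it cancels uniformly. Removing both common factors drops every coefficient to degree two, and setting $N=2n$ (so $\epsilon=0$) or $N=2n+1$ (so $\epsilon=1$) and comparing shows the outcome is precisely twice the asserted difference equation, proving the claim up to that harmless overall constant. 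I expect the careful tracking of these two cancellations, not any conceptual difficulty, to be where the attention is needed.
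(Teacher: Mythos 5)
Your proposal is correct and takes essentially the same route as the paper: the paper likewise reads the factorization off Theorem \ref{theorem3} (or Theorem \ref{theorem2}(c)) and obtains the difference equations by inserting $M_n^\lambda(s)=g_n(s)\,p_n^\lambda(s)$ into the recurrence of Theorem \ref{theorem2}(d), repeatedly applying $\Gamma(z+1)=z\Gamma(z)$, and then specializing to $n$ even or odd. The two cancellations you isolate --- the common factor $s+\epsilon$ and the difference-of-squares factor $2s+2N+2\lambda+5$ (in the paper's unshifted variable, $2s+2n+2\lambda+1$) --- are precisely the simplifications the paper performs implicitly in passing from its intermediate relation (2.3) to the stated quadratic-coefficient recurrences, and your bookkeeping, including the harmless overall factor of $2$, checks out.
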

.

\begin{theorem}[Binomial `critical polynomial' theorem]
\label{theorem5}
(a) The polynomials $p_n^\lambda(s)$, can be written (up to multiplication by a constant) in terms of binomial coefficients and powers of 2, as a variant of a Gould S:4/1 combinatorial function, such that
\[
p_{2n}^\lambda(s)=n! (2 n)! \sum _{r=0}^n \frac{ (-1)^{n-r} 2^{2 r-1}\binom{n+r+\lambda-1}{n+r} \binom{n+r}{2r}
\binom{\frac{1}{2}(s-2)+r}{r}\binom{n+\frac{1}{2}(s+\lambda)-\frac{3}{4}}{n-r}}{\binom{n}{r}},
\]
\[
p_{2n+1}^\lambda(s)=n! (2 n+1)! \sum _{r=0}^n \frac{(-1)^{n-r} 2^{2 r}\binom{n+r+\lambda}{n+r+1}
 \binom{n+r+1}{2 r+1}\binom{\frac{1}{2}(s-1)+r}{r}\binom{n+\frac{1}{2}(s+1+\lambda)-\frac{3}{4}}{n-r}}{\binom{n}{r}},
\]
or as a Gould S:3/2 combinatorial function variant, such that
\[
p_{2n}^\lambda(s)=n! (2 n)! \binom{n+\lambda-1}{n}\binom{n+\frac{1}{2}(s+\lambda)-\frac{3}{4}}{n}
\sum _{r=0}^n \frac{ (-1)^{n-r} 2^{2 r-1}\binom{n+r+\lambda-1}{r}\binom{n+r}{2r}\binom{\frac{1}{2}(s-2)+r}{r}}{\binom{n+r}{r}\binom{\frac{1}{2}(s+\lambda)-\frac{3}{4}+r}{r}},
\]
\[
p_{2n+1}^\lambda(s)=n! (2 n+1)! \binom{n+\lambda}{n+1}\binom{n+\frac{1}{2}(s+\lambda)-\frac{1}{4}}{n}
\sum _{r=0}^n \frac{ (-1)^{n-r}
2^{2 r}\binom{n+r+\lambda}{r}\binom{n+r+1}{2r+1}\binom{\frac{1}{2}(s-1)+r}{r}}{\binom{n+r+1}{r}\binom{\frac{1}{2}(s+\lambda)-\frac{1}{4}+r}{r}},
\]
thus establishing the binomial `critical polynomial' relationships (\ref{eq:crite}) and (\ref{eq:crito}).


\noindent
(b) Let $q_{n}^\lambda(s)$ denote the rational function in $s$ derived from the S:3/2 form of the `critical polynomial' $p_n^\lambda(s)$ such that
\[
q_{2n}^\lambda(s)=\frac{2n\binom{n+\lambda-1}{n}}{\lambda\binom{2n+2\lambda-1}{2n-1}}\sum _{r=0}^n \frac{ (-1)^{n-r} 2^{2r-1}\binom{n+r+\lambda-1}{r}\binom{n+r}{2r}\binom{\frac{1}{2}(s-2)+r}{r}}{\binom{n+r}{r}\binom{\frac{1}{2}(s+\lambda)-\frac{3}{4}+r}{r}},
\]
\[
q_{2n+1}^\lambda(s)=\frac{(2n+1)\binom{n+\lambda}{n+1}}{\lambda\binom{2n+2\lambda}{2n}}\sum _{r=0}^n \frac{ (-1)^{n-r}
2^{2 r}\binom{n+r+\lambda}{r}\binom{n+r+1}{2r+1}\binom{\frac{1}{2}(s-1)+r}{r}}{\binom{n+r+1}{r}\binom{\frac{1}{2}(s+\lambda)-\frac{1}{4}+r}{r}}.
\]
Then we have
\[
q_{2n}^\lambda(s)=\frac{2\,p_{2n}^\lambda(s)}{\textstyle\lambda  (n-1)! (2 n)!\binom{2n+2\lambda-1}{2n-1} \binom{n+\frac{1}{2}(s+\lambda)-\frac{3}{4}}{n}}
=\frac{2^{2n+1} p_{2n}^\lambda(s)}{(2\lambda)_{2n}\prod_{j=1}^n (2s+2\lambda +4j-3)},
\]
\[
q_{2n+1}^\lambda(s)=\frac{p_{2n+1}^\lambda(s)}{\textstyle\lambda (n)! (2 n)!\binom{2n+2\lambda}{2n} \binom{n+\frac{1}{2}(s+\lambda)-\frac{1}{4}}{n}}
=\frac{2^{2n+1} p_{2n+1}^\lambda(s)}{(2\lambda)_{2n+1}\prod_{j=1}^n (2s+2\lambda +4j-1)},
\]
where both numerator and denominator polynomials of $q_n^\lambda(s)$ are of degree $\lfloor n/2\rfloor$.

\noindent
(c) For $\lambda >-1/2$, $\lambda \ne 0$, and $\Re\,s>0$, the rational function $q_n^\lambda(s)$ has no singularities, and has the same `critical zeros' as the polynomial $p_n^\lambda(s)$, so that for $t\in\mathbb{R}$, the roots of $p_n^\lambda(1/2+it)$ and $q_n^\lambda(1/2+it)$ are identical.

When $s\in\mathbb{R}_{>1}$, $q_{n}^\lambda(s)$ takes values on $(0,1)$, with
$\mathop{\lim}_{s\rightarrow \infty}q_n^\lambda(s)=~1$ (from below), and with $\varepsilon=0$ for $n$ even and $\varepsilon=1$ for $n$ odd, obeys the functional equation
\be
q_{n}^\lambda(s)=(-1)^{\lfloor n/2\rfloor}\binom{\lfloor n/2\rfloor +\frac{1-s+\lambda+\epsilon}{2}-\frac{3}{4}}{\lfloor n/2\rfloor}
\binom{\lfloor n/2\rfloor+\frac{s+\lambda+\epsilon}{2}-\frac{3}{4}}{\lfloor n/2\rfloor}^{-1}q_{n}^\lambda(1-s).
\label{eq:fq1}
\ee
As $s\rightarrow \infty$ along the positive real axis it follows that $q_{n}^\lambda(s)\rightarrow 1$, from below, as does $1/\zeta(s)$.
\end{theorem}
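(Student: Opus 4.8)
The plan is to treat parts (a) and (b) as essentially algebraic consequences of Theorems \ref{theorem3} and \ref{theorem4}, and to reserve the genuine analysis for part (c). For part (a) I would start from the S:4/2 and S:3/1 representations of $M_n^\lambda(s)$ in Theorem \ref{theorem3} and use Theorem \ref{theorem4} to isolate the polynomial factor, writing $p_n^\lambda(s)$ as the explicit Gamma-prefactor of Theorem \ref{theorem4} multiplied by $M_n^\lambda(s)$. The one structural move is to convert the Gamma-quotient $\Gamma(\tfrac{s+n+\lambda}{2}+\tfrac14)/\Gamma(\tfrac{s+\lambda}{2}+\tfrac14)$ into a rising factorial and recognise it as $\lfloor n/2\rfloor!\binom{\lfloor n/2\rfloor+\frac{s+\lambda+\epsilon}{2}-\frac34}{\lfloor n/2\rfloor}$; this binomial then cancels the factor $\binom{n+\frac{s+\lambda}{2}-\frac34}{n}$ in the denominator of the Theorem \ref{theorem3} sum, leaving the S:4/1 form (modulo bookkeeping of the powers of $2$ and of the constant $n!(2n)!$, which I would pin down against the stated low-order value $p_2(s)=\tfrac{3s}{2}-\tfrac34$). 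To pass from S:4/1 to S:3/2 I would apply two elementary binomial identities: with $N=n+\tfrac{s+\lambda}{2}-\tfrac34$, the identity $\binom{N}{n-r}=\binom{N}{n}\binom{n}{r}/\binom{\frac{s+\lambda}{2}-\frac34+r}{r}$, which pulls out the overall factor $\binom{N}{n}$ and supplies the denominator $\binom{\frac{s+\lambda}{2}-\frac34+r}{r}$, and $\binom{n+r+\lambda-1}{n+r}=\binom{n+\lambda-1}{n}\binom{n+r+\lambda-1}{r}/\binom{n+r}{r}$, which supplies the outer factor $\binom{n+\lambda-1}{n}$ and the denominator $\binom{n+r}{r}$. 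Setting $\lambda=1$ collapses $\binom{n+\lambda-1}{n}$ and $\binom{n+r+\lambda-1}{r}/\binom{n+r}{r}$ to $1$, giving the S:2/1 form.

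For part (b) the rational function $q_n^\lambda(s)$ is just the S:3/2 sum times the displayed normalising constant, so the two stated expressions follow at once from part (a): dividing $p_n^\lambda(s)$ by $n!(2n)!\binom{n+\lambda-1}{n}\binom{N}{n}$ recovers the bare sum, after which reconciling the prefactors is a matter of rewriting $\binom{n+\lambda-1}{n}$, $\binom{2n+2\lambda-1}{2n-1}$ and the accompanying factorials through $(2\lambda)_{2n}$ and through $\binom{N}{n}=\frac{1}{n!\,4^n}\prod_{j=1}^n(2s+2\lambda+4j-3)$; the factors of $2$, of $n$ and of the factorials then combine into the $2^{2n+1}/[(2\lambda)_{2n}\prod(2s+2\lambda+4j-3)]$ form. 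This step is purely routine.

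The substance is in part (c). The no-singularities and same-zeros claims I would obtain by locating the denominator zeros: the linear factors $2s+2\lambda+4j-3$ (resp.\ $2s+2\lambda+4j-1$) vanish only at $s=\tfrac32-\lambda-2j\le-\tfrac12-\lambda<0$ for $\lambda>-\tfrac12$ and $j\ge1$, so $q_n^\lambda$ has no pole with $\Re s>0$; since $q_n^\lambda=p_n^\lambda/D_n$ with $D_n$ nonvanishing there, its zeros in $\Re s>0$ coincide with those of $p_n^\lambda$, which by Theorem \ref{theorem2}(e) all lie on $\Re s=\tfrac12$, yielding the asserted identity of the roots of $p_n^\lambda(\tfrac12+it)$ and $q_n^\lambda(\tfrac12+it)$. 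The functional equation (\ref{eq:fq1}) I would read off directly from $p_n^\lambda(s)=(-1)^{\lfloor n/2\rfloor}p_n^\lambda(1-s)$: writing $q_n^\lambda(s)=p_n^\lambda(s)/D_n(s)$ with $D_n\propto\binom{N(s)}{n}$ gives $q_n^\lambda(s)=(-1)^{\lfloor n/2\rfloor}\big[\binom{N(1-s)}{n}/\binom{N(s)}{n}\big]q_n^\lambda(1-s)$, which is exactly (\ref{eq:fq1}) once $N(s)$ and $N(1-s)$ are written in the $\epsilon$-notation.

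For the claim that $q_n^\lambda(s)$ takes values in $(0,1)$ and tends to $1$ from below, I would exploit the $s$-dependence of the S:3/2 sum in the reduced form $q_{2n}^\lambda(s)=C\sum_r a_r\,(\tfrac s2)_r/(\tfrac{s+\lambda}{2}+\tfrac14)_r$, where the $r$th ratio equals $\prod_{i=0}^{r-1}(s+2i)/(s+\lambda+\tfrac12+2i)$ and lies in $(0,1)$, increasing to $1$, for real $s>1$ and $\lambda>-\tfrac12$. The limit $q_n^\lambda(\infty)=1$ is the heart of the matter: letting $s\to\infty$ sends every such ratio to $1$, so the limit reduces to $C\sum_r a_r$, equivalently to the residual ${}_2F_1(-n,\lambda+n;\tfrac12;1)$ obtained from the Theorem \ref{theorem3} hypergeometric form when the $\tfrac s2$ numerator and $\tfrac{s+\lambda}{2}+\tfrac14$ denominator parameters cancel asymptotically. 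Chu--Vandermonde then evaluates this terminating ${}_2F_1(1)$ in closed form, and I expect the constant $C$ to have been chosen precisely so that the product equals $1$; verifying that the normalisation indeed forces the value $1$ is the step I expect to be the main obstacle. Positivity on $(1,\infty)$ and the strict approach from below I would then deduce by combining continuity with the absence of poles and of real zeros there (so $q_n^\lambda$ keeps constant, hence positive, sign) and with the sign of the sub-leading coefficient of $D_n-p_n^\lambda$, which governs whether $q_n^\lambda-1<0$ near $+\infty$; the delicate point is ruling out a crossing of the value $1$ on $(1,\infty)$, for which I would show that the degree-$(<\lfloor n/2\rfloor)$ polynomial $D_n(s)-p_n^\lambda(s)$ is strictly positive throughout that interval.
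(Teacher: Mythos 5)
Your proposal tracks the paper's own proof essentially step for step: part (a) by isolating $p_n^\lambda(s)$ from the Theorem \ref{theorem3} sums via the Theorem \ref{theorem4} Gamma-prefactor and then the two binomial identities you state (both of which check out, and which are exactly the manipulations the paper invokes, with the $\lambda=1$ collapse to S:2/1 identical); part (b) as routine rearrangement of the normalising constants; and part (c) by locating the denominator zeros at $s=\tfrac32-\lambda-2j<0$, transferring the functional equation through the ratio $\binom{N(1-s)}{n}/\binom{N(s)}{n}$, and taking the term-by-term limit $s\to\infty$ in the S:3/2 sum, precisely as the paper does. The one cosmetic difference is that you evaluate the limiting sum by Chu--Vandermonde, ${}_2F_1(-n,\lambda+n;\tfrac12;1)=(-1)^n(\lambda+\tfrac12)_n/(\tfrac12)_n$, where the paper instead quotes two closed-form combinatorial identities without proof---these are the same fact in different clothing, and the normalisation does work out to give the limit $1$---while the step you honestly flag as unfinished (ruling out $q_n^\lambda(s)\geq 1$ anywhere on $(1,\infty)$, for which your proposed positivity of $D_n-p_n^\lambda$ is a restatement rather than a proof) is exactly the point the paper itself treats most tersely, with only a one-line appeal to ``upper bounds for the combinatorial sums'' that does not address the alternating signs either.
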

\begin{corollary}
\label{corollary40}
The polynomial factors arising from the Mellin transform of the Chebyshev polynomials have the simpler form
as a variant of a Gould S:2/1 combinatorial function, such that
\[
p_{2n}(s)=n! (2 n)! \binom{n+\frac{s}{2}-\frac{1}{4}}{n}
\sum _{r=0}^n \frac{ (-1)^{n-r} 2^{2 r-1}\binom{n+r}{2r}\binom{\frac{1}{2}(s-2)+r}{r}}{\binom{\frac{s}{2}-\frac{1}{4}+r}{r}},
\]
\[
p_{2n+1}(s)=n! (2 n+1)! \binom{n+\frac{s}{2}+\frac{1}{4}}{n}
\sum _{r=0}^n \frac{ (-1)^{n-r}
2^{2 r}\binom{n+r+1}{2r+1}\binom{\frac{1}{2}(s-1)+r}{r}}{\binom{\frac{s}{2}+\frac{1}{4}+r}{r}}.
\]
For $s =t\in \mathbb{Z}$ an integer, and with $\mathcal{C}_n=\frac{1}{n+1}\binom{2n}{n}$, the $n$th Catalan number, the polynomials $4\mathcal{C}_{n-1}p_{2n}(t)$
and $\mathcal{C}_{n}p_{2n+1}(t)$ yield integers with only odd prime factors. Moreover the polynomials
\be
\label{intexam}
\frac{2^{2n+1}}{(2n)!}p_{2n}(t),\quad\text{and}\quad \frac{2^{2n+1}\mathcal{T}_{n+1}}{(2n+2)!}p_{2n+1}(t),
\ee
with $\mathcal{T}_{n+1}$ the largest odd factor of $n+1$, yield odd integers with fewer prime factors.
\end{corollary}

\begin{example} As an example we give respectively tables of the expressions in $(\ref{intexam})$, for $0\leq n\leq5$, with $t$ an integer in the range $-4\leq t\leq4$. The functional relation can be clearly observed in both cases.
\[
\begin{array}{c|ccccccccc}
{\bf n \backslash \,t} & {\bf-4} & {\bf 3} &{\bf -2} & {\bf-1} &{\bf 0} &{\bf 1} &{\bf 2} &{\bf 3} &{\bf 4}\\ \hline
{\bf 0}& 1 & 1 & 1 & 1 & 1 & 1 & 1 & 1 & 1  \\
{\bf 1} &-27 & -21 & -15 & -9 & -3 & 3 & 9 & 15 & 21  \\
{\bf 2} & 421 & 261 & 141 & 61 & 21 & 21 & 61 & 141 & 261  \\
{\bf 3} & -7119 & -3969 & -1995 & -861 & -231 & 231 & 861 & 1995 & 3969  \\
{\bf 4} & 154665 & 80361 & 36729 & 13401 & 3465 & 3465 & 13401 & 36729 & 80361  \\
{\bf 5} & -4029795 & -1946637 & -828135 & -293073 & -65835 & 65835 & 293073 & 828135 & 1946637  \\
\end{array}
\]
\[
\begin{array}{c|ccccccccc}
{\bf n \backslash \,t} & {\bf-4} & {\bf 3} &{\bf -2} & {\bf-1} &{\bf 0} &{\bf 1} &{\bf 2} &{\bf 3} &{\bf 4} \\ \hline
{\bf 0} & 1 & 1 & 1 & 1 & 1 & 1 & 1 & 1 & 1  \\
{\bf 1} & -9 & -7 & -5 & -3 & -1 & 1 & 3 & 5 & 7  \\
{\bf 2} & 279 & 183 & 111 & 63 & 39 & 39 & 63 & 111 & 183  \\
{\bf 3} & -1341 & -819 & -465 & -231 & -69 & 69 & 231 & 465 & 819  \\
{\bf 4} & 128637 & 72765 & 37581 & 17325 & 8157 & 8157 & 17325 & 37581 & 72765  \\
{\bf 5} & -1809459 & -959805 & -465975 & -197505 & -52731 & 52731 & 197505 & 465975 & 959805  \\
\end{array}
\]
\end{example}

\begin{theorem}[Perfect reflection property theorem]
\label{theorem6}
We say that $f(s)$ has the `perfect reflection property' to mean $f(\overline{s})=\overline{f(s)}$, $f(s)=\chi f(1-s)$, with $\chi=\pm 1$,
$f(s)=0$, only when $\Re\, s=1/2$.

Then the polynomials
$$p_n(s;\beta)={{\Gamma\left({{n+s} \over 2}\right)} \over {\Gamma\left({{s+\varepsilon} \over 2}\right)}}
~_3F_2\left(1-\beta,{{1-n} \over 2},-{n \over 2};2(1-\beta),1-{{(n+s)} \over 2};1\right),$$
have the \emph{perfect reflection property} with $\chi(n)=(-1)^{\lfloor n/2\rfloor}$,
wherein $\varepsilon=0$ for $n$ even and $=1$ for $n$ odd, $\beta<1$,
of degree $\lfloor n/2 \rfloor$, satisfy the functional equation
$p_n(s;\beta)=(-1)^{\lfloor n/2 \rfloor} p_n(1-s;\beta)$.  These polynomials have zeros only
on the critical line, and all zeros $\neq 1/2$ occur in complex conjugate pairs.
\end{theorem}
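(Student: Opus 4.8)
The plan is to recognise that the one-parameter family $p_n(s;\beta)$ is, after a change of parameter, the Gegenbauer family $p_n^\lambda(s)$ already analysed, and then to transport the three properties of the \emph{perfect reflection property} through this identification, treating one degenerate value separately. Comparing the $_3F_2$ in the statement with the general-$n$ hypergeometric form recorded in Theorem \ref{theorem3},
\[
p_n^\lambda(s)=\frac{\Gamma\!\left(\tfrac{n+s}{2}\right)}{\Gamma\!\left(\tfrac{s+\varepsilon}{2}\right)}\,
{}_3F_2\!\left(\tfrac{\lambda}{2}+\tfrac14,\tfrac{1-n}{2},-\tfrac{n}{2};\,\tfrac12+\lambda,\,1-\tfrac{n+s}{2};\,1\right),
\]
one sees that the substitution $1-\beta=\tfrac{\lambda}{2}+\tfrac14$, equivalently $\lambda=\tfrac32-2\beta$, sends the upper parameter $\tfrac\lambda2+\tfrac14$ to $1-\beta$ and the lower parameter $\tfrac12+\lambda$ to $2(1-\beta)$, so that $p_n(s;\beta)=p_n^{\lambda(\beta)}(s)$ identically in $s$. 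The admissible windows match exactly: $\lambda>-\tfrac12\iff\beta<1$, with $\lambda=0\iff\beta=\tfrac34$. This reduces Theorem \ref{theorem6} to the established properties of $p_n^\lambda$, once the degree and reality claims are checked in the $\beta$-variable and the single value $\beta=\tfrac34$ is disposed of.

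For the degree, reality, and conjugate pairing: since even $n$ forces $-\tfrac n2$ and odd $n$ forces $\tfrac{1-n}{2}$ to be a non-positive integer, the series terminates after $\lfloor n/2\rfloor+1$ terms. Using $\Gamma\!\left(\tfrac{n+s}{2}\right)\big/\bigl(1-\tfrac{n+s}{2}\bigr)_k=(-1)^k\Gamma\!\left(\tfrac{n+s}{2}-k\right)$, the prefactor combines with the $s$-dependent denominator of the $k$-th term to give, for $n=2m$, the polynomial $(\tfrac s2)_{m-k}$ of degree $m-k$ (and $(\tfrac{s+1}{2})_{m-k}$ for $n=2m+1$); hence $p_n(s;\beta)$ is a genuine polynomial of degree $\lfloor n/2\rfloor$ with leading coefficient $2^{-\lfloor n/2\rfloor}$, independent of $\beta$. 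All coefficients are real for real $\beta$, giving $p_n(\bar s;\beta)=\overline{p_n(s;\beta)}$, and together with the reflection this yields the complex-conjugate pairing of any off-line zeros. For $\beta<1$, $\beta\ne\tfrac34$ the identity $p_n(s;\beta)=(-1)^{\lfloor n/2\rfloor}p_n(1-s;\beta)$ is exactly Theorem \ref{theorem2}(c) carried across the reparametrisation; for $\beta=\tfrac34$ I would argue by analytic continuation in $\beta$, since for fixed $n,s$ both sides are regular on $(-\infty,1)$ — the only lower parameter that could collapse is $2(1-\beta)=\tfrac12\ne0,-1,\dots$ there — and they agree on the punctured interval, hence at $\beta=\tfrac34$.

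The location of zeros is the crux. Writing $s=\tfrac12+w$, the reflection gives $g(w):=p_n(\tfrac12+w;\beta)=(-1)^{\lfloor n/2\rfloor}g(-w)$, so $g$ is even or odd and factors as $w^{\delta}h(w^2)$ with $\delta\in\{0,1\}$; a zero lies on $\Re s=\tfrac12$ exactly when $w\in i\mathbb{R}$, i.e. when $w^2$ is real and $\le 0$. Thus it suffices to show that $h$ has all its roots real and non-positive. For $\lambda>-\tfrac12$, $\lambda\ne0$ this is Theorem \ref{theorem2}(e); the underlying mechanism — used directly for $\beta=\tfrac34$ as well — is that, after the duplication $\bigl(\tfrac{1-n}{2}\bigr)_k\bigl(-\tfrac n2\bigr)_k=4^{-k}(-n)_{2k}$, the restriction $p_n(\tfrac12+it;\beta)$ is a real multiple of a continuous Hahn polynomial in $t$ whose parameters occur in conjugate pairs with positive real part precisely when $\beta<1$, so that the classical reality theorem for the zeros of those orthogonal polynomials forces all $t$-zeros to be real. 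Alternatively $\beta=\tfrac34$ is reached by Hurwitz's theorem: the zeros vary continuously in $\beta$, the leading coefficient $2^{-\lfloor n/2\rfloor}$ never vanishes so no zero escapes to infinity, and each zero — confined to the closed line $\Re s=\tfrac12$ for all nearby $\beta$ — remains on it in the limit.

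I expect this last step to be the main obstacle. The functional equation by itself only renders the zero set symmetric about the critical line (under $s\mapsto 1-s$ and $s\mapsto\bar s$); pinning the zeros \emph{onto} the line genuinely requires the orthogonal-polynomial input, together with the verification that the parameter window $\beta<1$ coincides exactly with the positivity range guaranteeing real zeros of the associated continuous Hahn family. Everything else — the reparametrisation, the degree and reality bookkeeping, and the boundary value $\beta=\tfrac34$ — is routine once that identification is in place.
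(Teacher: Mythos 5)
Your proposal is correct, and its core step --- the reparametrisation $\lambda=3/2-2\beta$ (equivalently $\beta=3/4-\lambda/2$) identifying $p_n(s;\beta)$ with the Gegenbauer factor $p_n^{\lambda}(s)$ of Theorem \ref{theorem3} --- is exactly the paper's own device for the zero-location claim: the published proof disposes of the zeros in one line, ``setting $\lambda=3/2-2\beta$'' in Theorem \ref{theorem2}. Where you genuinely diverge is the functional equation: rather than transporting Theorem \ref{theorem2}(c), the paper re-runs the proof of Theorem \ref{theorem1} directly in the $\beta$-variable, using the Beta-transform integral representation (2.4), which writes the ${}_3F_2$ as $\int_0^1(1-x)^{-\beta}x^{-\beta}\,{}_2F_1\bigl(\tfrac{1-n}{2},-\tfrac{n}{2};1-\tfrac{n+s}{2};x\bigr)\,dx$, followed by the $x\to 1-x$ transformation and Gamma reflection to extract $(-1)^{\lfloor n/2\rfloor}$; that argument covers all $\beta<1$ uniformly, with no value to excise. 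Your route instead must patch $\beta=3/4$ (i.e.\ $\lambda=0$, where the Gegenbauer family itself degenerates) --- by analytic continuation in $\beta$ for the functional equation and by Hurwitz for the zeros --- and both patches are sound as you set them up (coefficients rational in $\beta$ with poles only at $\beta\geq 1$, fixed leading coefficient $2^{-\lfloor n/2\rfloor}$ preventing escape of zeros); indeed you are more careful here than the paper, which silently includes $\lambda=0$ in its appeal to Theorem \ref{theorem2}. One caution: your continuous-Hahn fallback overstates what is on record --- with the paper's Section 3 identification the parameters are $\bigl(\tfrac14-\tfrac{\lambda}{2}-n,\,0,\,\tfrac34-\tfrac{\lambda}{2}-n,\,\tfrac12\bigr)$, which have negative real parts for large $n$, so the classical positivity hypothesis guaranteeing real zeros does not hold ``precisely when $\beta<1$'' as you assert, and the paper itself offers that section only as a sketch. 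Since your primary mechanism is Theorem \ref{theorem2}(e) plus Hurwitz, this does not damage the proof, but the Hahn remark should be dropped or reworked before it is relied upon.
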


\begin{corollary}
\label{corollary2}
(a) The properties of Theorem \ref{theorem6} are satisfied by the polynomials
$$p_n(s;0)={{2(n+s)} \over {(n+1)(n+2)}}{{\Gamma\left({{n+s} \over 2}\right)} \over {\Gamma\left({{s+\varepsilon} \over 2}\right)}} \left[1- ~_2F_1\left(-{{(n+1)} \over 2},-{n \over
2}-1;-{{(n+s)} \over 2};1\right)\right]$$
$$={{2(n+s)} \over {(n+1)(n+2)}}{{\Gamma\left({{n+s} \over 2}\right)} \over {\Gamma\left({{s+\varepsilon} \over 2}\right)}} \left[1- {{\Gamma\left(-{{(n+s)} \over 2}\right)} \over {\Gamma\left({{1-s} \over 2}\right)}} {{\Gamma\left({{n+3-s} \over 2}\right)} \over {\Gamma\left(1-{s \over 2}\right)}}\right].$$
(b) More generally, for $\beta$ a negative integer, the properties of Theorem \ref{theorem6} are satisfied
by the polynomials $p_n(s;-m)$, and these polynomials may be written in terms of elementary
factors and the Gamma function.
\end{corollary}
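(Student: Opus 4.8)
Since the parameter value $\beta=0$ in part (a) and $\beta=-m$ in part (b) both satisfy $\beta<1$, the perfect reflection property, the confinement of the zeros to the critical line, and the functional equation $p_n(s;\beta)=(-1)^{\lfloor n/2\rfloor}p_n(1-s;\beta)$ are all inherited verbatim from Theorem \ref{theorem6}. The real content of the corollary is therefore the explicit closed-form evaluation of the terminating ${}_3F_2(1)$ that defines $p_n(s;\beta)$, and the plan is to perform this by collapsing the special numerator/denominator pair $1-\beta,\,2(1-\beta)$.

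For part (a) I would set $\beta=0$, so this pair becomes $1,\,2$ and $(1)_k/(2)_k=1/(k+1)$. The defining series then collapses to an integral,
$$ {}_3F_2\!\left(1,\tfrac{1-n}{2},-\tfrac n2;2,1-\tfrac{n+s}{2};1\right)=\sum_{k\ge 0}\frac{(\frac{1-n}{2})_k(-\frac n2)_k}{(1-\frac{n+s}{2})_k\,k!\,(k+1)}=\int_0^1 {}_2F_1\!\left(\tfrac{1-n}{2},-\tfrac n2;1-\tfrac{n+s}{2};t\right)dt, $$
the interchange being trivial because the ${}_2F_1$ terminates into a polynomial in $t$. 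I would evaluate this via the elementary antiderivative identity $\int_0^1 {}_2F_1(a,b;c;t)\,dt=\frac{c-1}{(a-1)(b-1)}\big[{}_2F_1(a-1,b-1;c-1;1)-1\big]$, which follows term-by-term from $\frac{d}{dt}{}_2F_1(a-1,b-1;c-1;t)=\frac{(a-1)(b-1)}{c-1}{}_2F_1(a,b;c;t)$. With $a=\tfrac{1-n}{2}$, $b=-\tfrac n2$, $c=1-\tfrac{n+s}{2}$ the shifts are $a-1=-\tfrac{n+1}{2}$, $b-1=-\tfrac n2-1$, $c-1=-\tfrac{n+s}{2}$, so the rational prefactor $\frac{c-1}{(a-1)(b-1)}=-\frac{2(n+s)}{(n+1)(n+2)}$ combines with the sign of the bracket to give exactly $\frac{2(n+s)}{(n+1)(n+2)}\big[1-{}_2F_1(\cdots)\big]$; multiplying by the surviving factor $\Gamma(\tfrac{n+s}{2})/\Gamma(\tfrac{s+\varepsilon}{2})$ produces the first displayed form. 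The second form follows by applying Gauss's theorem ${}_2F_1(a,b;c;1)=\Gamma(c)\Gamma(c-a-b)/[\Gamma(c-a)\Gamma(c-b)]$ to the shifted argument; since one of $-\tfrac{n+1}{2}$, $-\tfrac n2-1$ is a non-positive integer according to the parity of $n$, the series terminates and Gauss applies with no convergence restriction, and the arguments $c=-\tfrac{n+s}{2}$, $c-a-b=\tfrac{n+3-s}{2}$, $c-a=\tfrac{1-s}{2}$, $c-b=1-\tfrac s2$ reproduce the stated quotient of Gamma functions.

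For part (b) the same mechanism generalizes: with $\beta=-m$ the special pair becomes $m+1,\,2m+2=2(m+1)$, and I would invoke Euler's integral representation to write
$$ {}_3F_2\!\left(m+1,\tfrac{1-n}{2},-\tfrac n2;2m+2,1-\tfrac{n+s}{2};1\right)=\frac{\Gamma(2m+2)}{\Gamma(m+1)^2}\int_0^1 t^{m}(1-t)^{m}\,{}_2F_1\!\left(\tfrac{1-n}{2},-\tfrac n2;1-\tfrac{n+s}{2};t\right)dt, $$
valid since $2m+2>m+1>0$. The crucial point is that the inner ${}_2F_1$ again terminates into a polynomial in $t$ of degree $\lfloor n/2\rfloor$, so the whole integrand $t^{m}(1-t)^{m}{}_2F_1(\cdots;t)$ is a genuine polynomial and the integral reduces to a finite $\mathbb{Z}$-linear combination of Beta integrals, that is, to ratios of Gamma functions times rational factors. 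Together with the front factor $\Gamma(\tfrac{n+s}{2})/\Gamma(\tfrac{s+\varepsilon}{2})$ this exhibits $p_n(s;-m)$ in terms of elementary factors and the Gamma function, as claimed; part (a) is recovered as the $m=0$ instance of this argument.

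The routine ingredients are the two reduction identities and Gauss's theorem; the only genuine care needed is the bookkeeping of parameter shifts and the sign of the prefactor in part (a), and, in part (b), the observation that termination of the inner ${}_2F_1$ is precisely what converts the Euler integral into a finite sum of Beta functions—without it the integral would not be elementary. I expect the most laborious (but entirely mechanical) step to be assembling the finite Beta-integral sum of part (b) into a single tidy closed form.
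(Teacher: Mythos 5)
Your proposal is correct, and it rightly identifies that the structural claims (functional equation, zeros on the critical line) are inherited directly from Theorem \ref{theorem6} since $\beta=0$ and $\beta=-m$ satisfy $\beta<1$, so the content is the explicit reduction --- which is exactly how the paper treats it. For part (a) your route is essentially the paper's in integral clothing: writing $1/(k+1)=\int_0^1 t^k\,dt$ and applying the antiderivative identity for ${}_2F_1$ is term-for-term the same algebra as the paper's shift of summation index via $(1)_j/(2)_j=1/(j+1)$ and $(\kappa)_{j-1}=(\kappa-1)_j/(\kappa-1)$; your parameter shifts $a-1=-\tfrac{n+1}{2}$, $b-1=-\tfrac{n}{2}-1$, $c-1=-\tfrac{n+s}{2}$, the prefactor $-\tfrac{2(n+s)}{(n+1)(n+2)}$, and the Gauss evaluation all check out against the stated displays. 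For part (b) you take a genuinely different (though equivalent) route: the paper applies partial fractions to the ratio $(m+1)_j/(2m+2)_j$, viewed as $1/[(j+m+1)\cdots(j+2m+1)]$ up to constants, and then re-indexes to express the ${}_3F_2(1)$ as a finite combination of ${}_2F_1(1)$ series, each summed by Gauss; you instead invoke the Euler integral representation with kernel $t^m(1-t)^m$ and let termination of the inner ${}_2F_1$ convert the integral into a finite sum of Beta integrals. The two are the same computation in disguise --- partial fractions of the Pochhammer ratio corresponds precisely to expanding $(1-t)^m$ binomially and integrating $t^{j+m+k}$ --- but your version is more compact and makes the role of termination explicit, while the paper's keeps everything in hypergeometric language and exhibits $p_n(s;-m)$ directly as Gamma-ratio combinations. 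Two minor points of care: the coefficients in your Beta-integral sum are rational functions of $s$ (coming from $(c)_k$ with $c=1-\tfrac{n+s}{2}$), not a $\mathbb{Z}$-linear combination as you wrote; and the Gauss evaluations at the shifted parameters should be read as the terminating Chu--Vandermonde case (as you note, one numerator parameter is a non-positive integer according to the parity of $n$), so no convergence condition is required. Neither affects the validity of your argument.
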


\section{Proof of the Main Results}

\begin{proof}[Proof of Theorem \ref{thm:gr6}]
Setting $a=1$, $\alpha=s$, $\beta = \frac{\lambda}{2}+\frac{1}{4}$, and $c=1$ in (\ref{gr1}), and taking $\epsilon=0$ and then $\epsilon=1$ respectively gives us (\ref{gr2}) and (\ref{gr3}). The displays in (\ref{gr5}) then follow by setting $n=0$ in (\ref{gr2}) and (\ref{gr3}).

To see the hypergeometric form (\ref{gr4}) of $M_n^\lambda(s)$, we use the series representation (\cite{rainville}, p. 278 (6))
$$C_n^\lambda(x)=\sum_{k=0}^{[n/2]} {{(2\lambda)_n x^{n-2k} (x^2-1)^k} \over {4^k k! (\lambda+
1/2)_k(n-2k)!}},$$
and recall a form of the Beta integral,
$$\int_0^1 x^{a-1}(1-x^2)^{b-1}dx={1 \over 2}B\left({a \over 2},b\right), ~~~~\mbox{$\Re\,a>0$},
~~\mbox{$\Re\,b>0$}.$$
Then
$$M_n^\lambda(s)=\sum_{k=0}^{[n/2]} {{(2\lambda)_n (-1)^k} \over {4^k k! (\lambda+
1/2)_k(n-2k)!}}\int_0^1x^{s+n-2k-1}(1-x^2)^{k+\lambda/2-3/4}dx$$
$$=\sum_{k=0}^{[n/2]} {{(2\lambda)_n (-1)^k} \over {4^k k! (\lambda+
1/2)_k(n-2k)!}}{1 \over 2}B\left({{s+n} \over 2}-k,k+{\lambda \over 2}+{1 \over 4}\right)$$
$$={{(2\lambda)_n} \over {2\Gamma\left({{s+n+\lambda} \over 2}+{1 \over 4}\right)}}
\sum_{k=0}^{[n/2]} {{(-1)^k} \over {4^k k!}}{{\Gamma\left({{s+n} \over 2}-k\right)
\Gamma\left(k+{\lambda \over 2}+{1 \over 4}\right)} \over {(\lambda+1/2)_k(n-2k)!}}$$
$$={{\Gamma\left({\lambda \over 2}+{1 \over 4}\right)\Gamma\left({{s+n} \over 2}\right)}\over {2\Gamma \left({{s+n+\lambda} \over 2}+{1 \over 4}\right)}}
~\binom{2\lambda+n-1}{n}~_3F_2\left({\lambda \over 2}+{1 \over 4},{{1-n} \over 2},-{n \over 2};{1 \over 2}+\lambda,
1-{{(n+s)} \over 2};1\right),$$
as required, where the above has used the duplication formula for the Gamma function so that
$${1 \over {(n-2k)!}}={1 \over {\Gamma(n-2k+1)}}={4^k \over {n!}}\left(-{n \over 2}\right)_k
\left({{1-n} \over 2}\right)_k.$$

 \end{proof}

\begin{proof}[Proof of Theorem \ref{theorem2}]  (a) follows from (\cite{andrews}, p. 303 or \cite{grad}, p. 1030)
$$(n+2)C_{n+2}^\lambda(x)=2(\lambda+n+1)xC_{n+1}^\lambda(x)-(2\lambda+n)C_n^\lambda(x),$$
$C_0^\lambda(x)=1$, and $C_1^\lambda(x)=2\lambda x$.
(b) follows from (\cite{andrews}, p. 302 or \cite{grad}, p. 1029)
$$(1-2xt+t^2)^{-\lambda}=\sum_{n=0}^\infty C_n^\lambda(x)t^n.$$
(c) The Mellin transforms may be computed explicitly by several means.  In particular, from
$$C_n^\lambda(x)={{(\lambda)_n} \over {n!}} (2x)^n ~_2F_1\left(-{n \over 2},{{1-n} \over 2};
1-n-\lambda;{1 \over x^2}\right),$$
transformation of the $_2F_1$ function to argument $x^2$ and use of \cite{grad} (p. 850), we find that
$$M_n^\lambda(s)={{(\lambda)_n} \over {n!}} 2^{n-1}\sqrt{\pi}(-i)^n \Gamma\left({\lambda \over
2}+{1 \over 4}\right)\Gamma(1-\lambda-n)$$
$$\times\left\{{{-2i\Gamma\left({{s+1} \over 2}\right)} \over {\Gamma\left({1 \over 2}-
\lambda-{n \over 2}\right)}} {{_3F_2\left({{1-n} \over 2},\lambda+{{n+1} \over 2},{{s+1}
\over 2};{3 \over 2},{3 \over 4}+{{\lambda+s} \over 2};1\right)} \over
{\Gamma\left(-{n \over 2}\right)\Gamma\left({{\lambda+s} \over 2}+{3 \over 4}\right)}}\right.$$
$$\left. + {{\Gamma\left({s \over 2}\right)} \over {\Gamma\left({{1-n} \over 2}\right)}} {{_3F_2\left(-{n \over 2},\lambda+{n \over 2},{s \over 2};{1 \over 2},{1 \over 4}+{{\lambda+s} \over 2};1\right)} \over
{\Gamma\left(1-\lambda-{n \over 2}\right)\Gamma\left({{\lambda+s} \over 2}+{1 \over 4}\right)}}\right \}.$$
The second line of the right member provides the transform for $n$ odd and the third
line for $n$ even.  Then transformation of the $_3F_2$ functions and arguments similar to the
proof of Theorem~\ref{theorem1} (b) may be used to show that the degree of the polynomial factors of the transforms is again $\lfloor n/2 \rfloor$, and that they satisfy the functional equation $p_n^\lambda(s)=(-1)^{\lfloor n/2 \rfloor} p_n^\lambda(1-s)$.

An alternative proof of the functional equation can be deduced directly from the first remark, concerning the fact that $B_m=0$ in the recurrence relation, following on from the proof of part (e) of this theorem.

(d) To obtain the difference equation for $M_n^\lambda(s)$, we apply the ordinary differential equation satisfied by Gegenbauer polynomials (e.g.,
\cite{grad}, p. 1031)
$$(x^2-1)y''(x)+(2\lambda+1)xy'(x)-n(2\lambda+n)y(x)=0.$$
If $f(x)\equiv C_n^\lambda(x)/(1-x^2)^{3/4-\lambda/2}$, we then substitute
$C_n^\lambda(x)=(1-x^2)^{3/4-\lambda/2}f(x)$ into this differential equation.
We then find that
$${1 \over 4}(1-x^2)^{-1/4-\lambda/2}\left[(6-4(\lambda+2\lambda n+n^2)+(-9+4(\lambda+n)^2)x^2
)f(x) \right.$$
$$\left. +4(x^2-1)(-4xf'(x)+(1-x^2)f''(x))\right]=0.$$
It follows that the quantity in square brackets is zero.  We multiply it by $x^{s-1}$
and integrate from $x=0$ to $1$, integrating the $f'$ term once by parts, and the $f''$
term twice by parts.  We determine that the Mellin transforms satisfy the following
difference equation:
$$[6-4(\lambda+2\lambda n+n^2)-16s+8s(s+1)]M_n^\lambda(s)$$
$$+[-9+4(n+\lambda)^2+16(s+2)-4(s+2)(s+3)]M_n^\lambda(s+2)$$
$$-4(s-1)(s-2)M_n^\lambda(s-2)=0,$$
and hence the result.

(e) To show that the resulting zeros of $p_n^\lambda(s)$ occur only on $\Re\,s=1/2$ we apply a connection with continuous Hahn polynomials, which are given by (e.g., \cite{andrews} p. 331, \cite{askey})
\be
\label{hahnhyp}
h_m(x;a,b,c,d)=i^m {{(a+c)_m(a+d)_m} \over {m!}} ~_3F_2\left(-m,m+a+b+c+d-1,a+ix;a+c,a+d;1
\right).
\ee
By using the second transformation of a terminating $_3F_2(1)$ series given in the Appendix,
we have
$$h_m(x;a,b,c,d)={i^m \over {m!}}(a+b+c+d+m-1)_m(1-b-m-ix)_m $$
$$ \times ~_3F_2(1-b-c-m,1-b-d-m,-m;2-a-b-c-d-2m,1-b-m-ix;1).$$
Then comparing with the $_3F_2(1)$ function of Theorem \ref{thm:gr6}
\[
M_n^\lambda(s)={{\Gamma\left({\lambda \over 2}+{1 \over 4}\right)\Gamma\left({{s+n} \over 2}\right)}\over {2\Gamma \left({{s+n+\lambda} \over 2}+{1 \over 4}\right)}}\binom{2\lambda+n-1}{n}
~_3F_2\left({\lambda \over 2}+{1 \over 4},{{1-n} \over 2},-{n \over 2};{1 \over 2}+\lambda,
1-{{(n+s)} \over 2};1\right),
\]
we see when $n=2m$ is even, that setting
\[
x=\frac{i}{2}\left (-s+\frac{1}{2}\right ),\quad a=c=\frac{1}{2}-\frac{\lambda}{2}-m,\quad b=d=\frac{1}{4},
\]
our polynomial
factors $p_{n}^\lambda(s)$ are proportional to continuous Hahn polynomials such  that
\be p_{n}^\lambda(s)= (m!)^2 2^{2 m-1} (-i)^m \binom{m+\lambda-1}{m} h_m\left(\frac{-i}{2}\left (s-\frac{1}{2}\right );\frac{1}{2}-\frac{\lambda}{2}-m,{1 \over 4},\frac{1}{2}-\frac{\lambda}{2}-m, {1 \over 4}\right)\label{hahn1}.
\ee
Similarly when $n=2m+1$ is odd, setting
\[
x=\frac{i}{2}\left (-s+\frac{1}{2}\right ),\quad a=c=-\frac{\lambda}{2}-m,\quad b=d=\frac{3}{4},
\]
our polynomial factors $p_{n}^\lambda(s)$ are proportional to continuous Hahn polynomials such that
\be p_{n}^\lambda(s)= (m!)^2 2^{2 m} (-i)^m \lambda\binom{m+\lambda}{m} h_m\left(\frac{-i}{2}\left (s-\frac{1}{2}\right );-{\lambda \over 2}-m,\frac{3}{4},-{\lambda \over 2}-m, \frac{3}{4}\right)\label{hahn2}.\ee
For fixed values of $a,b,c,d$, the continuous Hahn polynomials are an orthogonal system of polynomials which satisfy the recurrence relation (e.g see 6.10.11 of \cite{andrews})
\[
A_{m}\hat{h}_{m+1}(x)=((a+ix)+A_{m}+C_{m})\hat{h}_{m}(x)-C_{m}\hat{h}_{m-1}(x),
\]
where
$$
\hat{h}_{m}(x)\ :=\hat{h}_{m}(x;a,\ b,\ c,\ d)=D_m h_{m}(x)=\ \frac{m!}{i^{m}(a+c)_{m}(a+d)_{m}}h_{m}(x;a,\ b,\ c,\ d)
$$
so that $D_m=m!/(i^{m}(a+c)_{m}(a+d)_{m})$, $\hat{h}_{m}(x)$ is the $_3F_2$ hypergeometric function given in (\ref{hahnhyp}), and
\[
A_{m}=\textstyle{-\frac{(m+a+b+c+d-1)(m+a+c)(m+a+d)}{(2m+a+b+c+d-1)(2m+a+b+c+d)}},\quad
C_{m}= \textstyle{\frac{m(m+b+c-1)(m+b+d-1)}{(2m+a+b+c+d-2)(2m+a+b+c+d-1)}}.
\]
In the case that $a=\bar{c}$ and $b=\bar{d}$, the recurrence relation simplifies to
\[
A_{m}\hat{h}_{m+1}(x)=ix\hat{h}_{m}(x)-C_{m}\hat{h}_{m-1}(x),
\]
and substituting for $\hat{h}_m$ and rearranging we have
\be
\label{eq:fundrec}
h_{m+1}(x)=\frac{i x D_m}{D_{m+1}A_m}h_m(x)-\frac{C_m D_{m-1}}{D_{m+1}A_m}h_{m-1}(x)
=G_m\, x\,h_m(x)-H_m\, h_{m-1}(x),
\ee
say, where
\[
G_m=\frac{ i D_m}{D_{m+1}A_m},\quad\text{and}\quad H_m=\frac{C_m D_{m-1}}{D_{m+1}A_m}.
\]
As the $a,b,c,d$ values must be constant for the conditions of the orthogonality theorems to be met, we set $m=u$ a constant in the variable $a$, so that in (\ref{eq:fundrec}) we take $a=c=\frac{1}{2}-\frac{\lambda}{2}-u,\quad b=d=\frac{1}{4}$ to obtain $h_{m+1}(x)=G_m\, x\,h_m(x)-H_m\, h_{m-1}(x)$, with
\be
\label{eq:gh1}
G_m=\frac{ (-2 \lambda +4 m-4 u+1) (-2 \lambda +4 m-4 u+3)}{2 (m+1) (-2 \lambda +2 m-4 u+1)},
\ee
and
\be
\label{gh2}
H_m=\frac{(2 m-1) (2 \lambda -4 m+4 u+1) (-2 \lambda +4 m-4 u+3) (-\lambda +m-2 u)}{16 (m+1) (-2 \lambda +2 m-4 u+1)}.
\ee
Similarly setting $a=c=-\frac{\lambda}{2}-u,\quad b=d=\frac{3}{4}$ in (\ref{eq:fundrec}) gives us
$h_{m+1}(x)=G_m\, x\,h_m(x)-H_m\, h_{m-1}(x)$, with $G_m$ the same as in (\ref{eq:gh1}), but where $H_m$ is now given by
\be
\label{gh3}
H_m=\frac{(2 m+1) (2 \lambda -4 m+4 u+1) (-2 \lambda +4 m-4 u+3) (-\lambda +m-2 u-1)}{16 (m+1) (-2 \lambda +2 m-4 u+1)}.
\ee
For the above coefficient $G_m$, and the two choices for the coefficient $H_m$, both of the resulting recurrence relations $h_{m+1}(x)=G_m\, x\,h_m(x)-H_m\, h_{m-1}(x)$ are of the form
\[
P_{m+1}(x)=(A_m x+B_m)P_m(x)-C_m P_{m-1}(x),
\]
with $B_m=0$, which is the form of the recurrence relation satisfied by a system of orthogonal polynomials that are not monic (e.g. see (4.2) p19 of \cite{chihara}).

More rigorously, if $\tilde{P}_m(x)$ is the monic (scaled) polynomial corresponding to $P_m(x)$, so that $P_m(x)=k_m\tilde{P_m}$, where $k_{-1}=1$, then in generality one finds that (e.g. see (4.3) p19 of \cite{chihara})
\[
A_m=\frac{k_{m+1}}{k_m},\quad,B_m=-c_{m+1} \frac{k_{m+1}}{k_m},\quad C_m=\mu_n\frac{k_{m+1}}{k_{m-1}}=\mu_n A_mA_{m-1},
\]
so that $c_m=0$, $\mu_m=C_m/(A_mA_{m-1})$, and where the corresponding monic recurrence relation can be written as
\be
\label{eq:fund}
\tilde{P}_m(x)=(x-c_m)\tilde{P}_{m-1}(x)-\mu_m\tilde{P}_{m-2}(x),\quad \tilde{P}_{-1}(x)=0,\quad \tilde{P}_0(x)=1,\quad m=1,2,3,\ldots
\ee
For our two recurrence relations we find that $\mu_m=H_m/(G_mG_{m-1})$, so that in the case $n=2m$ is even we have
\be
\mu_m=\frac{m (2 m-1) (2 \lambda -2 m+4 u+1) (-\lambda +m-2 u)}{4 (-2 \lambda +4 m-4 u-3) (-2 \lambda +4 m-4 u+1)},
\label{mu1}
\ee
and when $m=2m+1$ is odd
\be
\mu_m=\frac{m (2 m+1) (2 \lambda -2 m+4 u+1) (-\lambda +m-2 u-1)}{4 (-2 \lambda +4 m-4 u-3) (-2 \lambda +4 m-4 u+1)}.
\label{mu2}
\ee
For fixed values of $u$ and $\lambda$, and $m=0,1,2,3,\ldots$, with $\mu_m\neq 0$ and well defined, we obtain a pair of families of Hahn polynomials given by
\be
\label{hfam1}
h_m\left(x;\frac{1}{2}-\frac{\lambda}{2}-u,{1 \over 4},\frac{1}{2}-\frac{\lambda}{2}-u, {1 \over 4}\right),
\ee
and
\be
\label{hfam2}
h_m\left(x;-{\lambda \over 2}-u,\frac{3}{4},-{\lambda \over 2}-u, \frac{3}{4}\right).
\ee
In  both (\ref{hfam1}) and (\ref{hfam2}), we find that $h_{-1}(s)=0$ and $h_{0}(s)=1$, and applying Favard's Theorem (see for example \cite{chihara} Theorem 4.4, p21) concerning polynomial sequences satisfying the three-term recurrence relation given in (\ref{eq:fund}), for $c_m\in\mathbb{R}$ and $\mu_m\neq 0$ it follows that (\ref{hfam1}) and (\ref{hfam2}) form two families of orthogonal polynomial systems.

It is a well known fact (e.g. \cite{chihara}, Section 5, Chapter 1, or \cite{rivlin,szego}) that systems of orthogonal polynomials have only real zeros which interlace. Hence the polynomial families corresponding to (\ref{hfam1}) and (\ref{hfam2}) have only real zeros which interlace on the real line. Setting $x=\left (\frac{-i}{2}\right )(s-\frac{1}{2})$ in (\ref{hfam1}) and (\ref{hfam2}), the resulting polynomials will therefore have their zeros dilated by the factor of $\frac{1}{2}$ on the real line; rotated by $\frac{\pi}{2}$ clockwise onto the imaginary axis by the factor of $-i$, and then translated by $-\frac{1}{2}$ from the imaginary axis to the critical line $\Re s=\frac{1}{2}$. This means that setting $x=\left (\frac{-i}{2}\right )(s-\frac{1}{2})$ in (\ref{hfam1}) and (\ref{hfam2}), results in families of polynomials which have zeros only on the critical line $\Re\,s=\frac{1}{2}$.

Restricting $u\in\mathbb{N}$, to be a positive integer, analysis of (\ref{hfam1}) and (\ref{hfam2}) shows that for $\lambda\neq\frac{1}{2}$ (the Legendre polynomial case proven by the authors in \cite{coffeylettunpub}) and $\lambda\neq\frac{3}{2}$, $\mu_m$ is well defined for $u\leq m$, and when $\lambda =1$, $\mu_m$ is well defined for $u<2m$. If $\lambda$ is not an integer or half-integer then $\mu_m$ is well defined for all $u\in\mathbb{N}$.

Therefore, for each integer value $u\leq m$, and fixed $\lambda>-\frac{1}{2}$, $\lambda\neq\frac{1}{2}$, $\lambda\neq\frac{3}{2}$, we get a family of Hahn polynomials in (\ref{hfam1}), and also in (\ref{hfam2}). We can associate with each Hahn polynomial a lattice point $(m,u)$ in $\mathbb{R}^2$ in the positive quadrant of the plane, so that an orthogonal polynomial family associated with a specific value of $u$ generated by (\ref{hfam1}), corresponds to lattice points on the line $m=u$, parallel to the horizontal axis. For each particular family (so parallel line), there will exist one value of $m$, namely $m=u$, where replacing the polynomial variable with
$\left (\frac{-i}{2}\right )(s-\frac{1}{2})$, the Hahn polynomial corresponds to that given for $p_n^{\lambda}(s)$ given in (\ref{hahn1}). Hence when $n=2m$ is even, the family of polynomials $p_n^{\lambda}(s)$ can be visualised as points on a line diagonally cutting through the parallel lines corresponding to the different families of Hahn polynomials with the polynomial variable change $x=\left (\frac{-i}{2}\right )(s-\frac{1}{2})$. Conversely, for each family of Hahn polynomials determined by the integer $u$, there will exist exactly one polynomial in the family which is proportional to a $p_n^{\lambda}(s)$ polynomial. A similarly argument holds in the odd case $n=2m+1$, relating the $p_n^{\lambda}(s)$ polynomials to families of Hahn polynomials generated by (\ref{hfam2}) with the polynomial variable change $x=\left (\frac{-i}{2}\right )(s-\frac{1}{2})$.

The above argument implies that all the zeros of our polynomial families $p_n^{\lambda}(s)$ lie on the critical line $\Re s=\frac{1}{2}$, as required.

\end{proof}

\begin{remark} By Theorem 4.3 of \cite{chihara}, as $c_m=0$ in the recurrence (\ref{eq:fund}), it follows that $h_m(-x)=(-1)^m h_m(x)$, thus giving another proof of the functional relation $p_n^\lambda(s)=(-1)^{\lfloor n/2\rfloor}p_n^\lambda(1-s)$.
\end{remark}

\begin{remark} From the above geometric interpretation of the $p_n^\lambda$ polynomial line diagonally cutting through the parallel Hahn polynomial lines, it follows that scaled $p_n^\lambda(s)$ will not satisfy the three term recurrence relation obeyed by the Hahn polynomials.
\end{remark}

\begin{remark} If a family of polynomials with only critical zeros whose distribution of zeros is proportional to that of the Riemann zeta function is identified, then if possible it might be of interest to apply the above arguments and see for which values the recurrence coefficients satisfy Favard's Theorem.
\end{remark}

\begin{proof}[Proof of Theorem \ref{theorem1}] (a) According to Lemma \ref{lemma7},
the Mellin transforms are of the form 
\be
\label{eq:2.1}
M_n(s)=\Gamma\left({3 \over 4}\right) {{p_n(s) \Gamma\left({{s+\varepsilon} \over 2} \right)} \over {\Gamma\left({s \over 2}+{{2n+3} \over 4}\right)}},
\ee
where $\varepsilon=0$ for $n$ even and $=1$ for $n$ odd.
The recursions (\ref{eq:1.2a}) and (\ref{eq:1.2b}) then follow by inserting the form (\ref{eq:2.1}) into the recursion for $M_n^\lambda(s)$ given in part (a) of Theorem \ref{theorem2}, and repeatedly applying
the functional equation of the Gamma function, $\Gamma(z+1)=z\Gamma(z)$.

(b) We give a proof which makes use of properties of the Gamma function. From Lemma \ref{lemma7}(a), up to factors not involving $s$, the polynomials $p_n$ may be taken as
\be
\label{eq:2.2}
p_n(s)=(n+1){{\Gamma\left({3 \over 4}\right)} \over 2}{{\Gamma\left({{n+s} \over 2}\right)} \over {\Gamma\left({{s+\varepsilon} \over 2}\right)}} ~_3F_2\left({3 \over 4},{{1-n} \over 2},-{n \over 2};{3 \over 2},1-{{(n+s)} \over 2};1\right).
\ee
From the form of the
numerator parameters, the degree of $p_n$ is evident.

By a `Beta transformation' \cite{grad} (p. 850) we have the following integral
representation:
$$~_3F_2\left({3 \over 4},{{1-n} \over 2},-{n \over 2};{3 \over 2},1-{{(n+s)} \over 2};1\right)$$
$$={\sqrt{\pi} \over {2\Gamma^2(3/4)}}\int_0^1 (1-x)^{-1/4}x^{-1/4} ~_2F_1\left({{1-n} \over 2},-{n \over 2};1-{{(n+s)} \over 2};x\right)dx.$$
We use (\ref{eq:2.2}), together with an $x \to 1-x$ transformation of the $_2F_1$ function
\cite{grad} (p. 1043).  Owing to the poles of the $\Gamma$ function, and that $n$ is a
nonnegative integer, the $_2F_1(x)$ function then transforms to a single $_2F_1(1-x)$ function, and there results
$$p_n(s)={{(n+1)} \over 4} {{\Gamma\left({{n+s} \over 2}\right)} \over {\Gamma\left({{s+\varepsilon} \over 2}\right)}}{\sqrt{\pi} \over {\Gamma(3/4)}}
{{\Gamma\left(1-{{(n+s)} \over 2}\right)} \over {\Gamma\left(1-{s \over 2}\right)}}
{{\Gamma\left({{1+n-s} \over 2}\right)} \over {\Gamma\left({{1-s} \over 2}\right)}}$$
$$ \times \int_0^1 (1-x)^{-1/4}x^{-1/4} ~_2F_1\left({{1-n} \over 2},-{n \over 2};{{s-n+1}
\over 2};1-x\right)dx$$

$$={{(n+1)} \over 4} {\pi \over {\Gamma\left({{s+\varepsilon} \over 2}\right)}}{\sqrt{\pi} \over {\sin\pi\left({{n+s} \over 2}\right)\Gamma(3/4)}}
{1 \over {\Gamma\left(1-{s \over 2}\right)}}
{{\Gamma\left({{1+n-s} \over 2}\right)} \over {\Gamma\left({{1-s} \over 2}\right)}}$$
$$ \times \int_0^1 (1-x)^{-1/4}x^{-1/4} ~_2F_1\left({{1-n} \over 2},-{n \over 2};{{s-n+1}
\over 2};x\right)dx.$$
The following observations lead to verification of the functional equation.
When $n$ is even, $\varepsilon=0$,
$$\Gamma\left({s \over 2}\right)\Gamma\left(1-{s \over 2}\right)={\pi \over {\sin\pi(s/2)}},$$
leaving the denominator factor $\Gamma\left({{1-s} \over 2}\right)$.
When $n$ is odd, $\varepsilon=1$,
$$\Gamma\left({{s+1} \over 2}\right)\Gamma\left({{1-s} \over 2}\right)={\pi \over {\cos\pi(s/2)}},$$
leaving the denominator factor $\Gamma\left(1-{s \over 2}\right)$.

Hence the factor $(-1)^{\lfloor n/2 \rfloor}$ emerges as $\sin (\pi s/2)/\sin [\pi(n+s)/2] =(-1)^{n/2}$ when $n$ is even and as $\cos (\pi s/2)/\sin [\pi(n+s)/2]=(-1)^{(n-1)/2}$ when
$n$ is odd, and the functional equation of $p_n(s)$ follows.

(c) The fact that the zeros all lie on the critical line $\Re s=1/2$ is a corollary of Theorem~\ref{theorem2}~(e).  


\end{proof}

\begin{proof}[Proof of Theorem \ref{theorem3}]
The binomial forms of $M_n^\lambda (s)$ are obtained by rewriting the hypergeometric forms given in Theorem \ref{thm:gr6} in terms of Pochhammer symbols, and then (in a variety of orders) applying the five identities
\[
\binom{a+s-1}{s}=\frac{a(a+1)\ldots (a+s-1)}{s!}=\frac{(a)_s}{s!},
\]
\[
\binom{n+m}{k}\binom{n}{k}^{-1}=\frac{(n+m)_m}{(n-k+1)_m},\qquad
\binom{n}{k+m}\binom{n}{k}^{-1}=\frac{(n-m-k+1)_m}{(k+1)_m},
\]
\[
\binom{n}{k-m}\binom{n}{k}^{-1}=\frac{(n-k+1)_m}{(k-m+1)},\qquad
\binom{c}{b}\binom{a+b}{b}^{-1}=\binom{a+c}{c}^{-1}\binom{a+c}{c-b}.
\]
Corollary \ref{corollary1} then follows immediately by by replacing $M_0^\lambda(s)$ and $M_0^\lambda(s+1)$ with their equivalent binomial coefficients forms when $s$ is respectively an even or an odd integer.
\end{proof}

\begin{remark} With a simple change of variable, the Mellin transforms of $(\ref{eq:m9})$ may
be obtained from \cite{grad} (p. 830).  Alternative forms of these transforms and their
generating functions may be realised by using various expressions from \cite{rainville}
(pp. 279--280).
\end{remark}

\begin{proof}[Proof of Theorem \ref{theorem4}] It follows from either part (c) of Theorem \ref{theorem2} or the hypergeometric form in Theorem \ref{theorem3}, that the Mellin transforms are of the form
$$M_n^\lambda(s)={{\Gamma\left({\lambda \over 2}+{1 \over 4}\right)\Gamma\left({{s+\epsilon}
\over 2}\right)} \over {2(n!)\Gamma\left({{s+n+\lambda} \over 2}+{1 \over 4}\right)}}
p_n^\lambda(s).$$

To demonstrate the difference equation for $p_n^\lambda(s)$
$$[6-4(\lambda+2\lambda n+n^2)-16s+8s(s+1)]\left({{s+\epsilon} \over 2}-1\right)
\left({{s+n+\lambda} \over 2}+{1 \over 4}\right)p_n^\lambda(s)$$
$$+[-9+4(n+\lambda)^2-4(s-1)(s+2)]\left({{s+\epsilon} \over 2}\right)\left({{s+\epsilon} \over 2}-1\right)p_n^\lambda(s+2)$$
$$-4(s-1)(s-2)\left({{s+n+\lambda} \over 2}+{1 \over 4}\right)\left({{s+n+\lambda} \over 2}
-{3 \over 4}\right)p_n^\lambda(s-2)=0,$$
wherein $\epsilon=0$ for $n$ even and $=1$ for $n$ odd, as follows from either part (d) or Theorem \ref{theorem3}, the Mellin transforms are of the form
$$M_n^\lambda(s)={{\Gamma\left({\lambda \over 2}+{1 \over 4}\right)\Gamma\left({{s+\epsilon}
\over 2}\right)} \over {2(n!)\Gamma\left({{s+n+\lambda} \over 2}+{1 \over 4}\right)}}
p_n^\lambda(s).$$
Noting that the factor $\Gamma\left({\lambda \over 2}+{1 \over 4}\right)/(2n!)$ is
independent of $s$, and repeatedly applying the functional equation $\Gamma(z+1)=z\Gamma(z)$, the result follows.

\end{proof}

\begin{proof}[Proof of Theorem \ref{theorem5}] (a) The S:4/1 type combinatorial expressions for the polynomial factors $p_n^\lambda(s)$ are obtained from the S:4/2 type expressions for $M_n^\lambda(s)$ in Theorem \ref{theorem3}, by respectively multiplying through by the factors
\[
\frac{n!(2n)!}{2M_0^\lambda(s)}\binom{n+\frac{s+\lambda }{2}-\frac{3}{4}}{n},\qquad \text{\rm or}\qquad
\frac{n!(2n+1)!}{2M_0^\lambda(s+1)}\binom{n+\frac{s+1+\lambda }{2}-\frac{3}{4}}{n},
\]
depending on whether $n$ is odd or even.

\noindent (b) The two expressions for $q_n^\lambda(s)$ given can be verified by inserting the explicit expressions for $p_n^\lambda$ given in part (a) into the latter expression for $q_n^\lambda(s)$ given in part (b) and rearranging.
The degree of both numerator and denominator polynomials of $q_n^\lambda(s)$ being $\lfloor n/2\rfloor$, then follows from the degree of the polynomials $p_n^\lambda(s)$ given in Theorem \ref{theorem2}, and the number of $s$-linear factors appearing in the denominator product of $q_n^\lambda(s)$.

\noindent (c) The zeros of the denominator polynomials (and so poles of $q_n^\lambda$), correspond to the zeros of the linear factors $2s+2\lambda+4j-3$, or $2s+2\lambda+4j-1$, with $1\leq j\leq \lfloor n/2\rfloor$. For $\lambda >-1/2$, $\lambda\ne 0$ and $\Re\,s>0$, each linear factor is non-zero, ensuring that the rational function $q_n^\lambda(s)$ has no singularities. Hence the `critical zeros' of the polynomials $p_n^\lambda(s)$, are the same as for $q_n^\lambda(s)$, and so for $t\in\mathbb{R}$, the roots of $p_n^\lambda(1/2+it)$ and $q_n^\lambda(1/2+it)$ are identical.

To see that the rational functions $q_n^\lambda(s)$ are normalised with limit 1 as $s\rightarrow \infty$, we consider the limit term by term as $s\rightarrow \infty$ in the S:3/2 sums of $(\ref{eq:crite})$ and $(\ref{eq:crito})$, giving
\[
\mathop{\lim}_{s\rightarrow\infty}
\sum _{r=0}^n \frac{ (-1)^{n-r}
2^{2 r-1}\binom{n+r+\lambda-1}{r}\binom{n+r}{2r}\binom{\frac{1}{2}(s-2)+r}{r}}{\binom{n+r}{r}\binom{\frac{1}{2}(s+\lambda)-\frac{3}{4}+r}{r}}
=\sum _{r=0}^n \frac{ (-1)^{n-r}
2^{2 r-1}\binom{n+r+\lambda-1}{r}\binom{n+r}{2r}}{\binom{n+r}{r}},
\]
\[
\mathop{\lim}_{s\rightarrow\infty}
\sum _{r=0}^n \frac{ (-1)^{n-r}
2^{2 r}\binom{n+r+\lambda}{r}\binom{n+r+1}{2r+1}\binom{\frac{1}{2}(s-1)+r}{r}}{\binom{n+r+1}{r}\binom{\frac{1}{2}(s+\lambda)-\frac{1}{4}+r}{r}}
=\sum _{r=0}^n \frac{ (-1)^{n-r}
2^{2 r}\binom{n+r+\lambda}{r}\binom{n+r+1}{2r+1}}{\binom{n+r+1}{r}}.
\]
Applying the combinatorial identities
\[
\sum _{r=0}^n \frac{ (-1)^{n-r}
2^{2 r-1}\binom{n+r+\lambda-1}{r}\binom{n+r}{2r}}{\binom{n+r}{r}}=\frac{1}{2}\binom{2n+2\lambda-1}{2n-1}\binom{n+\lambda-1}{n-1}^{-1},
\]
\[
\sum _{r=0}^n \frac{ (-1)^{n-r}
2^{2 r}\binom{n+r+\lambda}{r}\binom{n+r+1}{2r+1}}{\binom{n+r+1}{r}}
=\frac{n+1}{2n+1}\binom{2n+2\lambda}{2n}\binom{n+\lambda}{n}^{-1},
\]
we then have the upper bounds for the combinatorial sums, so that $\lim_{s\rightarrow \infty}q_n^\lambda(s)=1$ from below, as required.
The functional equation follows from that for $p_n^\lambda(s)$, by considering the third and fourth displays in part (b) of the theorem.

To see Corollary \ref{corollary40}, substituting $\lambda=1$ in the S/3:2 forms for $p_n^\lambda(s)$, simplifies the sums to the Gould S/2:1 combinatorial functions stated.
Term-by-term analysis of the $n+1$ terms in each sum then reveals that for $s$ an integer, each term is an even integer apart form the $r=0$ term, given by
\[
\frac{n!(2n)!}{2}\binom{n+\frac{s}{2}-\frac{1}{4}}{n},\quad\text{and}\quad
\frac{n!(2n+2)!}{2}\binom{n+\frac{s}{2}+\frac{1}{4}}{n},
\]
depending of whether $n$ is respectively even or odd. The binomial coefficients contribute the power of two $2^{-2n}$, so that the power of 2 in the $r=0$ term is determined by $(2n)!/2^{2n+1}$ when $n$ is even, and $(2n+2)!/2^{2n+1}$, when $n$ is odd. Noting that the $n!$ terms cancel between numerator and denominator, we see that multiplying through by the reciprocal of these respective powers of 2 will produce odd integer values for the $r=0$ term, whilst leaving the others terms $r=1,2,\ldots,n$ even. Hence the summation results in integers having only odd prime factors, being the sum of $n$ even numbers and one odd number.

Analysis of the $n$th Catalan number $$\mathcal{C}_n=\frac{1}{n+1}\binom{2n}{n},$$ shows that the power of 2 in the $\mathcal{C}_n$ is determined by $2^{2n+1}/(2n+2)!$ (A048881 in the OEIS) so that $4\mathcal{C}_{n-1}$ and $\mathcal{C}_n$ have the respective reciprocal powers of 2 to $p_{2n}$ and $p_{2n+1}$. It follows that for $s\in\mathbb{Z}$ we have $4\mathcal{C}_{n-1}p_{2n}$ and $\mathcal{C}_{n}p_{2n+1}$ are odd integers. A slight modification of this argument also removes the odd factors arising in the $(2n)!$ and $(2n+1)!$ polynomial factors such that generated by
\[
\frac{2^{2n+1}}{(2n)!}p_{2n}(s),\quad\text{and}\quad \frac{2^{2n+1}\mathcal{T}_{n+1}}{(2n+2)!}p_{2n+1}(s),
\]
where $\mathcal{T}_{n+1}$ is the largest odd factor of $n+1$. Therefore the above two expressions yield odd integers with fewer prime factors than $4\mathcal{C}_{n-1}p_{2n}$ and $\mathcal{C}_{n}p_{2n+1}$, as required.
\end{proof}

\begin{proof}[Proof of Theorem \ref{theorem6} and Corollary \ref{corollary2}] The proof follows that of Theorem \ref{theorem1}, noting the integral representation
$$\int_0^1 (1-x)^{-\beta}x^{-\beta} ~_2F_1\left({{1-n} \over 2},-{n \over 2};1-{{(n+s)} \over 2};x\right)dx$$
$$ =2^{2\beta-1} {{\sqrt{\pi} \Gamma(1-\beta)} \over {\Gamma(3/2-\beta)}} ~_3F_2\left(1-\beta,{{1-n} \over 2},-{n \over 2};2(1-\beta),1-{{(n+s)} \over 2};1\right),
$$
with $\beta<1$.
The $_2F_1(x)$ function is again transformed to a $_2F_1(1-x)$ function and the other
steps are very similar to before.

The location of the zeros follows from Theorem \ref{theorem2}, setting
$\lambda =3/2-2\beta$.

To see Corollary \ref{corollary2} (a) The initial $\beta=0$ reduction of Theorem \ref{theorem6} to $_2F_1$ form follows from the series definition of the $_3F_2$ function with a shift of summation index and the relations $(1)_j/(2)_j=1/(j+1)$ and $(\kappa)_{j-1}=(\kappa-1)_j/(\kappa-1)$.  The second reduction is a consequence of Gauss summation.
(b) Similarly, with $m$ a positive integer, $(m+1)_j/(2(m+1))_j$ may be reduced and partial
fractions applied to this ratio.  Then with shifts of summation index, the $_3F_2$ function may
be reduced to a series of $_2F_1(1)$ functions.  These in turn may be written in terms of
ratios of Gamma functions from Gauss summation.

\end{proof}


\section{Discussion}

Given the Gould variant combinatorial expressions obtained for $p_n^\lambda (s)$ and $q_n^\lambda (s)$, our results invite several other research questions, such as: is there a combinatorial
interpretation of $p_n^\lambda(s)$ or $q_n^\lambda (s)$, and more generally, of $p_n(s;\beta)$?  Relatedly, is there a
reciprocity relation for $p_n(s)$ and $p_n(s;\beta)$?

Two instances when the combinatorial sums produce ``nice'' combinatorial expressions are
\[
q_{2n}^\lambda(1)=\sum _{r=0}^n \frac{ (-1)^{n-r} 2^{2r-1}\binom{n+r+\lambda-1}{r}\binom{n+r}{2r}\binom{r-\frac{1}{2}}{r}}{\binom{n+r}{r}\binom{\frac{\lambda}{2}-\frac{1}{4}+r}{r}}
={\frac{1}{2}\binom{n+\frac{2 \lambda-3}{4}}{n} \binom{n+\frac{2 \lambda-1}{4}}{n}^{-1}},
\]
\[
q_{2n+1}^\lambda(2)=\sum _{r=0}^n \frac{ (-1)^{n-r}
2^{2 r}\binom{n+r+\lambda}{r}\binom{n+r+1}{2r+1}\binom{r+\frac{1}{2}}{r}}{\binom{n+r+1}{r}\binom{\frac{\lambda}{2}+\frac{3}{4}+r}{r}}
=(n+1) \binom{n+\frac{2 \lambda-3}{4}}{n}\binom{n+\frac{2 \lambda+3}{4}}{n}^{-1}.
\]
In fact, polynomials with only real zeros commonly arise in combinatorics and elsewhere.
Two examples are Bell polynomials \cite{harper} and Eulerian polynomials \cite{comtet} (p. 292). This suggests that there may be other relations of our results to discrete
mathematics and other areas.  In this regard, we mention matching polynomials with only
real zeros.  The matching polynomial $M(G,x)=\sum_k (-1)^k p(G,k)x^k$ counts the matchings
in a graph $G$.  Here, $p(G, k)$ is the number of matchings of size $k$, i.e., the
number of sets of $k$ edges of $G$, no two edges having a common vertex.
$M(G,x)$ satisfies recurrence relations and has only real zeros and $M(G - {v}, x)$
interlaces $M(G,x)$ for any $v$ in the vertex set of $G$ (e.g., \cite{godsil}).  As regards
\cite{bumpchoi,coffeymellin}, we note that
the classical orthogonal polynomials are closely related to the matching polynomials. For
example, the Chebyshev polynomials of the first two kinds are the matching polynomials of paths and cycles respectively, and the Hermite polynomials and the Laguerre polynomials are
the matching polynomials of complete graphs and complete bipartite graphs, respectively.

There are several open topics surrounding the recursions (\ref{eq:1.2a}) and (\ref{eq:1.2b}).  These include:
is it possible to reduce this three-term recursion to two terms, can a pure recursion
be obtained, and, can some form of it be used to demonstrate the occurrence of the
zeros only on the critical line?

The Gegenbauer polynomials have the integral representation
\be
C_n^\lambda(x)={1 \over \sqrt{\pi}}{{(2\lambda)_n} \over {n!}}{{\Gamma\left(\lambda+{1 \over 2}
\right)} \over {\Gamma(\lambda)}} \int_0^\pi (x+\sqrt{x^2-1}\cos \theta)^n \sin^{2\lambda-1}
\theta ~d\theta.
\label{eq:6.1}
\ee
Then binomial expansion of part of the integrand of $M_n^\lambda(s)$ is another way to
obtain this Mellin transform explicitly.  The representation (\ref{eq:6.1}) is also convenient for
showing further special cases that reduce in terms of Chebyshev polynomials $U_n$ or
Legendre or associated Legendre polynomials $P_n^m$.  We mention as examples
\be
C_n^2(x)={1 \over {2(x^2-1)}}[(n+1)xU_{n+1}(x)-(n+2)U_n(x)],
\label{eq:6.2}
\ee
and
$$C_n^{3/2}(x)={{(n+1)} \over {(x^2-1)}}[xP_{n+1}(x)-P_n(x)]=-{{P_{n+1}^1(x)} \over
\sqrt{1-x^2}}.$$

The Gegenbauer polynomials are a special case of the two-parameter Jacobi polynomials
$P_n^{\alpha,\beta}(x)$ (e.g., \cite{andrews}) as follows:
$$C_n^\lambda(x)={{(2\lambda)_n} \over {\left(\lambda+{1 \over 2}\right)_n}} P_n^{\lambda-1/2,
\lambda-1/2}(x).$$
The Jacobi polynomials are orthogonal on $[-1,1]$ with respect to the weight function
$(1-x)^\alpha (1+x)^\beta$.  Therefore, it is also of interest to consider Mellin transforms
such as
$$M_n^{\alpha,\beta}(s)=\int_0^1 x^{s-1}P_n^{\alpha,\beta}(x)(1-x)^{\alpha/2-1/2}(1+x)^{\beta/2
-1/2}dx,$$
especially as the Jacobi polynomials can be written in the binomial form
\[
P_n^{\alpha,\beta}(x)=\sum_{j=0}^n \binom{n+\alpha}{k}\binom{n+\beta}{n-s}\left (\frac{x-1}{2}\right )^{n-s}\left (\frac{x+1}{2}\right )^{s}.
\]
In fact this line of enquiry may provide a far more general approach to investigate `critical polynomials' arising from combinatorial sums.

\section*{Appendix}
Below are collected various transformations of terminating $_3F_2(1)$ series \cite{bailey}, where
again $(a)_n$ denotes the Pochhammer symbol.

$$_3F_2(-n,a,b;c,d;1)={{(c-a)_n(d-a)_n} \over {(c)_n(d)_n}} ~_3F_2(-n,a,a+b-c-d-n+1;
a-c-n+1,a-d-n+1;1)$$
$$={{(a)_n(c+d-a-b)_n} \over {(c)_n(d)_n}} ~_3F_2(-n,c-a,d-a;1-a-n,c+d-a-b;1)$$
$$={{(c+d-a-b)_n} \over {(c)_n}} ~_3F_2(-n,d-a,d-b;d,c+d-a-b;1)$$
$$=(-1)^n{{(a)_n(b)_n} \over {(c)_n(d)_n}} ~_3F_2(-n,1-c-n,1-d-n;1-a-n,1-b-n;1)$$
$$=(-1)^n {{(d-a)_n(d-b)_n} \over {(c)_n(d)_n}} ~_3F_2(-n,1-d-n,a+b-c-d-n+1;a-d-n+1,b-d-n+1;1)$$
$$={{(c-a)_n} \over {(c)_n}} ~_3F_2(-n,a,d-b;d,a-c-n+1;1)$$
$$={{(c-a)_n(b)_n} \over {(c)_n(d)_n}} ~_3F_2(-n,d-b;1-c-n;1-b-n,a-c-n+1;1).$$



\begin{thebibliography}{99}
\bibitem{nbs}M. Abramowitz and I. A. Stegun,
{Handbook of Mathematical Functions, Washington, National Bureau of Standards (1964).}
\bibitem{andrews}G. E. Andrews, R. Askey, and R. Roy,
{Special Functions, Cambridge University Press (1999).}
\bibitem{askey}R. Askey,
{Continuous Hahn polynomials, J. Phys. A {\bf 18}, L1017-L1019 (1985).}
\bibitem{baez} L. B\'aez-Duarte, A sequential Riesz-like criterion for the Riemann hypothesis, Int. J. Math. Sci., {\bf 21}, 3527-3537 (2005).
\bibitem{bailey}W. N. Bailey,
{Generalized hypergeometric series, Cambridge University Press (1935).}
\bibitem{bumpchoi}D. Bump, K.-K. Choi, P. Kurlberg, and J. Vaaler,
{A local Riemann hypothesis, I, Math. Z. {\bf 233}, 1-19 (2000).}
\bibitem{bumpng}D. Bump and E. K.-S. Ng,
{On Riemann's zeta function, Math. Z. {\bf 192}, 195-204 (1986).}
\bibitem{butzer}P. Butzer and S. Jansche,
{A direct approach to the Mellin transform, J. Fourier Analysis Appls. {\bf 3}, 325-376
(1997).}
\bibitem{chihara} T. S. Chihara, An Introduction to Orthogonal Polynomials, Dover Publications, New York (2011).
\bibitem{choi1}S. Choi, J. W. Chung and K. S. Kim, {Relation between primes and nontrivial zeros in the Riemann hypothesis; Legendre polynomials, modified zeta function and Schr\"odinger equation}, J. Math. Phys., {\bf 53}, 122108 (2012); doi: 10.1063/1.4770050 [correction in JMP {\bf 54}, 019901 (2013)].
\bibitem{HDF} M. W Coffey, J. L Hindmarsh, M. C. Lettington and J. Pryce, {On higher dimensional interlacing Fibonacci sequences, continued fractions and Chebyshev polynomials},  J. Theor. Nom. Bordx. {\bf 29}(2), pp. 369-423 (2017).
\bibitem{coffeymellin}M. W. Coffey,
{Special functions and the Mellin transforms of Laguerre and Hermite functions,
Analysis {\bf 27}, 95-108 (2007).}
\bibitem{coffeyxi}M. W. Coffey,
{Theta and Riemann xi function representations from harmonic oscillator eigenfunctions,
Phys. Lett. A {\bf 362}, 352-356 (2007).}
\bibitem{coffeylettunpub}M. W. Coffey and M. C. Lettington,
{Mellin transforms with only critical zeros:  Legendre functions, J. Number
Th., {\bf 148}, 507-536 (2015).}
\bibitem{coffeygen}M. W. Coffey,
{Mellin transforms with only critical zeros: generalized Hermite functions, arXiv:1308.6821
(2013).}
\bibitem{comtet}L. Comtet,
{Advanced combinatorics, Reidel, Dordrecht (1974).}
\bibitem{dilcher} K. Dilcher and K. B. Stolarsky, {A Pascal-Type Triangle Characterizing Twin Primes, Amer. Math. Monthly
{\bf 112}, 673-681 (2005).}
\bibitem{godsil} C. D. Godsil and I. Gutman,
{On the theory of the matching polynomial, J. Graph Theory {\bf 5}, 137-144 (1981).}
\bibitem{grad} I. S. Gradshteyn and I. M. Ryzhik,
{Table of Integrals, Series, and Products, Academic Press, New York (1980).}
\bibitem{gould} H. W. Gould, \emph{Combinatorial Identities}, Morgantown, W. Va. (1972).
\bibitem{harper} L. H. Harper,
{Stirling behavior is asymptotically normal, Ann. Math. Statist. {\bf 38}, 410-414 (1967).}
\bibitem{kirsch} P. Kirschenhofer, A. Peth\"{o}, and R. F. Tichy,
{On analytical and diophantine properties of a family of counting polynomials,
Acta Sci. Math. (Szeged) {\bf 65}, 47-59 (1999).}
\bibitem{koch} H. von Koch, {Sur la distribution des nombres premiers}, Acta Math., {\bf 24}, 159-182, (1901).
\bibitem{maslanka} K. Ma\'slanka, {Hypergeometric-like representation of the zeta-function of Riemann.}, arXiv:math-ph/0105007v1, 2001.
\bibitem{mason} J. C. Mason and D. C. Handscomb,
{Chebyshev polynomials, Chapman \& Hall (2003).}
\bibitem{rainville} E. D. Rainville,
{Special functions, Macmillan (1960).}
\bibitem{rivlin} T. J. Rivlin,
{Chebyshev polynomials:  From approximation theory to algebra and number theory, John Wiley
(1990).}
\bibitem{szego} G. Szeg\"o, Orthogonal Polynomials, AMS Colloquium Publications, Vol. 23, Amer. Math. Soc., Providence RI, (1975).
\end{thebibliography}
\end{document}